\newtheorem{lem}[subsubsection]{Lemma}
\newtheorem{prp}[subsubsection]{Proposition}
\newtheorem{crl}[subsubsection]{Corollary}
\newtheorem{LEM}[subsection]{Lemma}
\newtheorem{PRP}[subsection]{Proposition}
\newtheorem{CRL}[subsection]{Corollary}
\newtheorem*{Lem}{Lemma}
\newtheorem*{Prp}{Proposition}
\newtheorem*{Crl}{Corollary}
\theoremstyle{definition}
           \newtheorem{dfn}[subsubsection]{Definition}
           \newtheorem*{Dfn}{Definition}
           \newtheorem*{Rems}{Remarks}
\newcommand{\Cat}{\mathtt{Cat}}
\newcommand{\Cor}{\mathtt{Cor}}
\newcommand{\coc}{\mathit{coc}}
\newcommand{\Coc}{\mathtt{Coc}}
\newcommand{\colim}{\operatorname{colim}}
\newcommand{\Del}{\mathit{Del}}
\newcommand{\eq}{\mathit{eq}}
\newcommand{\Fun}{\operatorname{Fun}}
\newcommand{\Hom}{\mathrm{Hom}} 
\newcommand{\Ho}{\operatorname{Ho}}
\newcommand{\id}{\mathrm{id}}
\newcommand{\Left}{\mathbf{L}}
\newcommand{\Map}{\operatorname{Map}}
\mathchardef\mhyphen="2D
\newcommand{\Ob}{\operatorname{Ob}}
\newcommand{\op}{\mathrm{op}}
\newcommand{\rlarrows}{\substack{\longrightarrow\\ \longleftarrow}}
\newcommand{\Right}{\mathbf{R}}
\newcommand{\RE}{\mathtt{RE}}
\newcommand{\Set}{\mathtt{Set}}
\newcommand{\cC}{\mathcal{C}}
\newcommand{\cD}{\mathcal{D}}
\newcommand{\cE}{\mathcal{E}}
\newcommand{\cF}{\mathcal{F}}
\newcommand{\cL}{\mathcal{L}}
\newcommand{\cM}{\mathcal{M}}
\newcommand{\cN}{\mathcal{N}}
\newcommand{\cS}{\mathcal{S}}
\newcommand{\cW}{\mathcal{W}}
\newcommand{\cX}{\mathcal{X}}
\begin{document}

\title{So, what is a derived functor?}
\author{Vladimir Hinich}
\email{hinich@math.haifa.ac.il}
\address{Department of Mathematics, University of Haifa,
Mount Carmel, Haifa 3498838,  Israel}
\classification{18G10,18G55}
\keywords{Derived functor, $\infty$-category.}
\received{Month Day, Year}
\revised{Month Day, Year}
\published{Month Day, Year}
\submitted{Emily Riehl}
\volumeyear{0} 
\volumenumber{0} 
\issuenumber{0}   
\startpage{0}     
\articlenumber{0} 
 
\begin{abstract}We rethink the notion of derived functor in terms of correspondences, that is, functors $\cE\to[1]$. While derived functors in our sense, when  exist, 
are given by Kan extensions,  their existence is a 
strictly stronger property than the existence of Kan extensions. 
We show, however, that derived functors exist in the cases one expects them to exist.
Our definition is especially convenient for the description 
of a passage from an adjoint pair $(F,G)$ of functors to a 
derived adjoint pair $(\Left F,\Right G)$. In particular, 
canonicity of such a passage is immediate in our approach.
Our approach makes perfect sense in the context of $\infty$-categories.
\end{abstract}
\maketitle
\section{Introduction}
\epigraph{ This is a new rap on the oldest of stories —-- \\ Functors on abelian categories. \\ If the functor is left exact \\ You can derive it and that's a fact.\\ But first you must have enough injective\\ Objects in the category to stay active.\\ If that's the case — no time to lose;\\  Resolve injectively any way you choose.\\  Apply the functor and don't be sore —--\\  The sequence ain't exact no more.\\  Here comes the part that is the most fun, Sir,\\  Take homology to get the answer.\\  On resolution it don't depend:\\  All are chain homotopy equivalent.\\  Hey, Mama, when your algebra shows a gap\\  Go over this Derived Functor Rap.}
{ P.~Bressler, Derived Functor Rap, 1988}
\addtocounter{subsection}{-1}
\subsection{Advertisement}
\label{ss:advertisement}
Our approach to derived functors can be explained in 
one sentence.

\begin{framed}
\begin{center}
{\sl In the language of cocartesian
fibrations over $[1]$, calculation of a left derived functor
becomes a localization.}
\end{center}
\end{framed}
This sentence is, actually, a recipe:
\begin{itemize}
\item Convert a functor $f:\cC\to\cD$ into a cocartesian fibration $p:\cE\to[1]$.
\item Localize $\cE$.
\item If the localization $\cE'$ of $\cE$ remains a cocartesian fibration over $[1]$, we say that $f$ has a left derived functor; this is the functor classifying $\cE'$.
\end{itemize}

One similarly treats the right derived functors as well as 
the derived functors of an adjoint pair of functors. 
 
In this paper we argue that this approach leads to a
very good notion of derived functor. We show that,
given an adjoint pair of functors, their respective derived 
functors, if exist, are automatically adjoint. We also show 
that the derived functors defined in this way behave nicely 
in families, as explained in \ref{sss:intro-families} below.

\subsection{A bit of history}

The prehistoric understanding of derived functors, based on 
existence of resolutions, is beautifully described in the 
epigraph. While this description makes perfect sense, it 
cannot possibly serve as a definition; it is merely a 
construction. 

A historic period starts with reformulation of derived 
functors in terms of localization of categories, performed by Grothendieck and Verdier in the abelian setting, and by Quillen in topology. It was first documented in Hartshorne's notes ~\cite{RD}. The idea of this approach is
that, in order to make sense of arbitrary choise of resolutions, one has to construct a category where an object
of an abelian category and its resolution become isomorphic; this is the derived category, and it is constructed by localizing the category of complexes. A similar idea
led Quillen~\cite{Q.HA} to define model categories and their localizations, homotopy categories.

Using the language of localization, derived functor are 
defined by a universal property: according to 
Hartshorne~\cite{RD}, 1.5, and Quillen~\cite{Q.HA}, 4.1, a 
left derived functor can be defined as (what is nowadays 
called) the right Kan extension, whereas a right derived 
functor can be defined as the left Kan extension.

A similar approach is used for defining derived functors in the context of $\infty$-categories,
see~D.-C. Cisinski~\cite{C}.

Another definition of derived functor was suggested by P.~Deligne in his report on \'etale cohomology with proper support, see~\cite{D}, in the context of triangulated categories. The value of a left derived 
functor, according to Deligne, is a pro-object of the 
respective localization. If the values of a derived functor
so defined are corepresentable, then the derived functor is
a right Kan extension of the original functor. However, the existence of right Kan extension does not seem to imply
corepresentability of Deligne's derived functor.

\subsection{}
The definition of derived functors via Kan extensions is 
not, in our opinion, fully satisfactory. Here is one of the 
problems. The functors one has to derive often come in 
pairs~\footnote{adjoint pairs.}. And, given an adjoint pair 
of functors, one expects them to give rise to an adjoint 
pair of derived functors. Each separate derived functor has 
a universal description as a Kan extension; but adjoint pair 
is not just a pair of functors: to define an adjunction one 
also needs to specify a unit or a counit of the adjunction. 
This cannot be deduced in general from the description of 
derived functors as Kan extensions.

\subsection{Summary}

In Subsection~\ref{ss:def-derived} we define, following the recipe explained in~\ref{ss:advertisement},
left and right derived functors. Our definition immediately
implies that, for $f$ left adjoint to $g$, if $\Left f$ and $\Right g$ exist, they are automatically adjoint.

In Section~\ref{sec:corr-ext} we describe the category of 
correspondences and its full subcategory of cocartesian correspondences. 

The main results are proven in Section~\ref{sec:main}.

They include:

Corollary 4.2 saying that the left derived functor,  in the 
sense of Definition~\ref{dfn:left}, if exists, is a right 
Kan extension. 

Sufficient condition of existence of left (right) derived 
functor, including the case of left (right) Quillen 
functors.

In \ref{ss:functoriality} we study the properties of diagrams of derived functors. The details are explained in~\ref{sss:intro-families} below.

In~\ref{ss:deligne} we show that Deligne's definition of
derived functor given in~\cite{D}, is a special case of our definition. 

In~\ref{ss:infty-conv} we show that, if $f':\cC'\to\cD'$
is $\infty$-categorical derived functor, passage to the respective homotopy categories defines a derived functor
with respect to conventional localizations.

\begin{Rems} 
\vbox{\quad   }
\begin{itemize}
\item[1.]
The derived functors defined by Deligne
are known to automatically preserve adjunction, see B.~Keller's~\cite{K}, 13.6.
\item[2.] In 2007 George Maltsiniotis~\cite{Mal} made a beautiful observation: if Kan extensions $\Left f$ and 
$\Right g$ of an adjoint pair of functors $(f,g)$ are {\sl absolute}, $\Left f$ and $\Right g$ acqure an automatic 
adjunction. The same holds for an adjunction of infinity categories, see~\cite{C}. In particular, Quillen's adjunction leads to
absolute Kan extensions, so the derived functors between the
infinity categories underlying a Quillen pair, are automatically adjoint. Derived functors in the sense of Definition~\ref{dfn:left} are, actually, absolute.

We are not sure, however, that defining
derived functors as absolute Kan extensions would yield a notion preserving diagrams of derived functors as in ~\ref{sss:intro-families} below.
\end{itemize}
\end{Rems}

\subsubsection{Diagrams of derived functors} 
\label{sss:intro-families}
It is known that derived functor of a composition is not necessarily a composition of derived functors; it turns out, however, that this is the only obstacle to functoriality of the passage to derived functors; in 
Section~\ref{ss:functoriality} we prove that,
given a ($\infty$-) functor $F:B\to\Cat$ and an appropriate collection of ($\infty$-) subcategories $\cW_b\subset F(b),\ b\in B$, so that 
for any arrow $a:b\to b'$ in $B$ the derived functor of
$F(a)$ exists, and is compatible with compositions, then
the family of derived functors $\Left F(a)$ ``glue'' to
a   functor $\Left F: B\to\Cat$ carrying $b\in B$ to
the localization $F(b)'$ of $F(b)$ with respect to $\cW_b$,
and any arrow $a:b\to b'$ to $\Left F(a):F(b)'\to F(b')'$.

We expect this property will be useful in studying  higher descent in style of~\cite{HS} and \cite{Me}.

\subsection{Acknowledgements} This work, being formally 
independent of George Maltsiniotis' \cite{Mal}, stems from a 
similar dissatisfaction with the existing definition of 
derived functor. We are grateful to him for bringing our attention to this work. We are grateful to 
B.~Keller who explained us that Delgne's definition  leads to  automatical adjunction of the derived functors. We are also grateful to D.-C.~Cisinski who informed us 
about his book~\cite{C}. Discussions with Ilya Zakharevich 
were very useful. We are very grateful to the referee 
for his request to clarify some sloppy passages in the
original version of the paper.
The present work was partially supported 
by ISF grants 446/15 and 786/19.

\section{Left and right derived functors}

We will now present our definition of derived functors and formulate the main results. 

\

In what follows the word ``category'' means infinity 
category, and ``conventional category'' means a category in 
the conventional sense. Our way of dealing with infinity
categories is model-independent in the following sense.
We work in the (infinity) category of (infinity) categories
$\Cat$. All properties of categories or of morphisms in 
categories we use are invariant under equivalences. 
{\sl In particular, all limits and colimits in this paper are in the sense of infinifty categories.} Since
Quillen equivalence of  model categories gives rise to
equivalent infinity categories, and all existing models
for infinity categories are Quillen equivalent to each other, we can use existing results proven in specific models
to claim properties of our ``model-independent'' $\Cat$.
See more details about this approach in~\cite{H.EY}, 
Section 2.

\subsection{}
A functor $f:\cC\to\cD$ can be converted, via Grothendieck 
construction, to a cocartesian fibration $p:\cE\to[1]$.

One has $\cE=(\cC\times[1])\sqcup^\cC\cD$, with the 
map $\cC\to\cD$ given by $f$, see~\cite{H.lec}, 9.8. Here is 
a description of $\cE$ for conventional categories.

\begin{itemize}
\item $\Ob(\cE)=\Ob(\cC)\sqcup\Ob(\cD)$.
\item $\Hom_\cE(x,y)=
\begin{cases}
\Hom_\cC(x,y), & x,y\in\cC,\\
\Hom_\cD(x,y), & x,y\in\cD,\\
\Hom_\cD(f(x),y), & x\in\cC, y\in\cD,\\
\emptyset, & x\in\cD, y\in\cC.
\end{cases}$
\end{itemize}
The cocartesian fibration classified by the functor
$f:\cC\to\cD$ will be denoted $p_f:\cE_f\to[1]$.

\subsection{}
\label{ss:f}
Similarly, a functor $f:\cC\to\cD$ can be converted to
a cartesian fibration $q_f:\cF_f\to[1]$ given by the formula
$\cF_f=\cD\sqcup^\cC(\cC\times[1])$. Its fibers at $0$ and
$1$ are $\cD$ and $\cC$ respectively.

\subsection{Derived functors}
\label{ss:def-derived}

\subsubsection{Localization}
\label{sss:loc}
Recall that a localization of a category $\cC$ with respect
to its subcategory $\cW$ (we assume $\cM$ contains
the maximal subspace $\cC^\eq$ of $\cC$) is defined by a universal property: a functor $\cC\to\cC[\cW^{-1}]$ is an
initial object among functors carrying $\cW$ to equivalences. Localization is functorial in the following sense. Define $\Cat^+$ the category of marked categories,
that is pairs $(\cC,\cC^\circ)$ where 
$\cC^\circ$ is a subcategory of $\cC$ containing $\cC^\eq$.
Then the localization defines a functor $\cL:\Cat^+\to\Cat$
left adjoint to the functor $\cC\mapsto(\cC,\cC^\eq)$ 
\footnote{This is the most general notion of localization in infinity categories. We called it Dwyer-Kan localization, see~\cite{H.L}, in order to stress the origin of this notion.}.

\subsubsection{}
Let $f:\cC\to\cD$ be a functor. Let $q_\cC:\cC\to\cC'$ and
$q_\cD:\cD\to\cD'$ be localizations~\footnote{Without loss of generality we can think that $\cD=\cD'$.} defined by subcategories $W_\cC$ and $W_\cD$ of $\cC$ and $\cD$ respectively. Denote $\cE'_f$ the localization of $\cE_f$
with respect to $W_\cC\cup W_\cD$.

\begin{dfn}
\label{dfn:left}
If $\cE'_f$ is a cocartesian fibration over $[1]$,
we define a left derived functor $\Left f:\cC'\to\cD'$ as the functor classifying the cocartesian fibration $\cE'_f$.
\end{dfn}

Right derived functor is defined similarly. Denote 
$\cF'_f$ the localization of the cartesian fibration 
$\cF_f$ with respect to $W_\cC\cup W_\cD$.

\begin{dfn}
\label{dfn:right}
If $\cF'_f$ is a cartesian fibration over $[1]$,
we define a right derived functor $\Right f:\cC'\to\cD'$ as the functor classifying the cartesian fibration 
$\cF'_f\to[1]$.
\end{dfn}

We will discuss in Section~\ref{sec:main} the 
existence of derived functors defined in \ref{dfn:left} and \ref{dfn:right} and compatibility of these notions
with the known  ones.

The application to deriving adjunction is immediate.

\begin{prp}
\label{prp:adj}
Let $f:\cC\rlarrows\cD:g$ be an adjoint pair of functors.
Let $q:\cC\to\cC'$ and $q:\cD\to\cD'$ be localizations. Assume that the derived functors $\Left f$ and $\Right g$
exist. Then they form an adjoint pair.
\end{prp}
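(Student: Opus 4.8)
The plan is to exploit the fact that an adjunction $f:\cC\rlarrows\cD:g$ is the same datum as a single fibration over $[1]$ that is \emph{simultaneously} cocartesian and cartesian. Indeed, the cocartesian fibration $p_f:\cE_f\to[1]$ classifying $f$ coincides (up to equivalence over $[1]$) with the cartesian fibration $q_g:\cF_g\to[1]$ classifying $g$; this is the standard dictionary between adjoint pairs and bicartesian fibrations over $[1]$ (see \cite{H.lec}, and the description of $\cE_f$, $\cF_g$ given in \ref{ss:f}). So I would begin by fixing one category $\cE\to[1]$ with fiber $\cC$ over $0$ and $\cD$ over $1$, which is a cocartesian fibration classified by $f$ and a cartesian fibration classified by $g$, and I would identify $\cE$ with both $\cE_f$ and $\cF_g$.

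Next I would observe that the localization procedure is the same in both pictures: whether we form $\cE'_f$ as in Definition~\ref{dfn:left} or $\cF'_g$ as in Definition~\ref{dfn:right}, we are localizing the \emph{same} category $\cE$ at the \emph{same} subcategory $W_\cC\cup W_\cD$. By the universal property of localization (\ref{sss:loc}), this produces a single category $\cE'$, equipped with a functor $\cE'\to[1]$ (induced since $p:\cE\to[1]$ carries $W_\cC\cup W_\cD$ into equivalences, $[1]$ having only identity non-degenerate arrows). Now the hypothesis that $\Left f$ exists says precisely that $\cE'\to[1]$ is a cocartesian fibration, classified by $\Left f:\cC'\to\cD'$; the hypothesis that $\Right g$ exists says precisely that $\cE'\to[1]$ is a cartesian fibration, classified by $\Right g:\cD'\to\cC'$. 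Hence $\cE'\to[1]$ is bicartesian, and by the same dictionary run backwards, a functor over $[1]$ that is both cocartesian (classified by $\Left f$) and cartesian (classified by $\Right g$) is exactly the datum of an adjunction $\Left f\dashv\Right g$, with unit and counit read off from the (co)cartesian lifts in $\cE'$. This gives the adjunction, and moreover exhibits it as \emph{canonically} induced from the original one, since everything is obtained by applying the localization functor $\cL:\Cat^+\to\Cat$ to a single object $(\cE,W_\cC\cup W_\cD)$.

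The one point that needs genuine care --- and which I expect to be the main obstacle --- is the claim that $\cE'_f$ and $\cF'_g$ are \emph{the same} category over $[1]$, not merely abstractly equivalent. Concretely: the identification $\cE_f\simeq\cF_g$ over $[1]$ is canonical, but one must check that under this identification the marking $W_\cC\cup W_\cD$ on one side goes to the marking $W_\cC\cup W_\cD$ on the other, so that the localizations agree on the nose; this is clear because the marked arrows all lie in the fibers $\cC$ and $\cD$, which are identified compatibly. A second, more delicate point is that I am using the converse direction of the fibration--adjunction dictionary: that a bicartesian fibration over $[1]$ with fibers $\cC',\cD'$ \emph{encodes} an adjunction between its classifying functors. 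In the $\infty$-categorical setting this is a known result (it underlies the treatment of adjunctions via correspondences in \cite{H.lec} and the references there), so I would simply invoke it; the only thing to verify is that the classifying functor $\cC'\to\cD'$ of the cocartesian structure on $\cE'$ agrees with the $\Left f$ of Definition~\ref{dfn:left} and the classifying functor $\cD'\to\cC'$ of the cartesian structure agrees with the $\Right g$ of Definition~\ref{dfn:right}, which is immediate from the definitions since both are defined as ``the functor classifying $\cE'$'' for the respective fibration structure.
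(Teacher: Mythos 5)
Your proposal is correct and is exactly the paper's argument: the paper's entire proof is the one-line observation that $\cE_f=\cF_g$ and hence $\cE'_f=\cF'_g$, so the localized correspondence is a single bicartesian fibration over $[1]$ encoding the adjunction $\Left f\dashv\Right g$. You have simply spelled out the details (compatibility of markings, the fibration--adjunction dictionary in both directions) that the paper leaves implicit.
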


In fact, $\cE_f=\cF_g$ and so $\cE'_f=\cF'_g$.
\qed

\section{Correspondences and Kan extension}
\label{sec:corr-ext}

The key to understanding derived functors lies in 
the category of correspondences $\Cor$ and its full subcategory $\Cor^\coc$ of cocartesian correspondences.

\subsection{Correspondences}

Recall that a correspondence from $\cC$ to $\cD$ is given 
by a functor $\cC\times\cD^\op\to\cS$, where $\cS$ is the category of spaces. Equivalently, a correspondence can be defined as a functor $\cD\to P(\cC)$ to presheaves of $\cC$,
or, vice versa, as a functor $\cC^\op\to P(\cD^\op)$.

Equivalently, a correspondence from $\cC$ to $\cD$ can be 
encoded into a functor $p:\cE\to[1]$, together with the equivalences $\cC\stackrel{\sim}{\to}\cE_0$, 
$\cD\stackrel{\sim}{\to}\cE_1$, see \cite{L.T}, 2.3.1.3
and \cite{H.L}, 9.10. A detailed proof of this fact
(in a greater generality) can be found in~\cite{H.EY},
 Sect.~8.

We define $\Cor=\Cat_{/[1]}$ the category of correspondences and $\Cor^\coc$ its full subcategory
spanned by cocartesian fibrations over $[1]$. Note that 
arrows in $\Cor^\coc$ do not necessarily preserve cocartesian arrows. 

\subsubsection{}

The map $\{0\}\sqcup\{1\}\to[1]$ defines, by the base change, a functor $\partial=(\partial_0,\partial_1):\Cor\to\Cat\times\Cat$. Correspondences can be composed: if
$X,Y\in\Cor$ and $\cD:=\partial_1(X)=\partial_0(Y)$, the composition $Y\circ X$ is defined as follows. 
The category $Z:=X\sqcup^\cD Y$ is endowed with a map to
$[2]$; one defines $Y\circ X$ as the base change of $Z$
with respect to $\delta^1:[1]\to[2]$.

We denote by the same letter the restriction of 
$\partial$ to $\Cor^\coc$; the fiber at $(\cC,\cD)$ is denoted $\Cor^\coc_{\cC,\cD}$. The assignment $(\cC,\cD)\mapsto
\Cor^\coc_{\cC,\cD}$ is covariant in $\cD$ and contravariant in $\cC$. In fact, a map $f:\cC\to\cC'$
defines a functor $f^*:\Cor^\coc_{\cC',\cD}\to
\Cor^\coc_{\cC,\cD}$ carrying $X\in\Cor^\coc_{\cC',\cD}$
to the composition $X\circ\cE_f$. Similarly, $g:\cD\to\cD'$
defines a functor $g_*:\Cor^\coc_{\cC,\cD}\to
\Cor^\coc_{\cC,\cD'}$ carrying $X\in\Cor^\coc_{\cC,cD}$
to the composition $\cE_g\circ X$. Actually, the map
$\partial:\Cor^\coc\to\Cat\times\Cat$ is a bifibration in the sense of \cite{H.EY}, 2.2.6; the proof is given in 
\ref{sss:cor-coc-bif} below, after a reminder of the relevant notions. 

\subsubsection{Bifibrations. Lax bifibrations}
\label{sss:laxbi}

Let $p:X\to B\times C$ be a functor. We denote $p_B$ and 
$p_C$ its compositions with the projections to $B$ and $C$,
and $X_{b,c}$ (resp., $X_{b\bullet}$ or $X_{\bullet c}$)
the fiber of $p$ at $(b,c)$ (resp, the fiber of $p_b$ at $b$ or the fiber of $p_C$ at $c$).

\begin{Dfn}
The functor $p:X\to B\times C$ is called {\sl a lax bifibration} if
\begin{itemize}
\item[(1)] $p_B$ is a cartesian fibration and $p_C$ is a cocartesian fibration.
\item[(2)] $p$ is a morphism of cartesian fibrations over $B$,
as well as of cocartesian fibrations over $C$.
\end{itemize}
\end{Dfn}

If $p:X\to B\times C$ is a lax bifibration, the restriction
$p_C:X_{b\bullet}\to C$ is a cocartesian fibration and
$p_B:X_{\bullet c}\to B$ is a cartesian fibration.

\begin{lem}
\label{lem:criteria-bif}
The following conditions on $p$ are equivalent.
\begin{itemize}
\item[(1)] $p_C$ is a cocartesian fibration, $p$ is a map of cocartesian fibrations over $C$, and for any $c\in C$ 
the restriction of $p_B$ to $X_{\bullet c}$ is a cartesian fibration.
\item[(2)] $p_B$ is a cartesian fibration, $p$ is a map of cartesian fibrations over $B$, and for any $b\in B$ 
the restriction of $p_C$ to $X_{b\bullet}$ is a cocartesian fibration.
\item[(3)] $p$ is a lax bifibration.
\end{itemize}
\end{lem}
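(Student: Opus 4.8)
The plan is to prove the cycle of implications $(3)\Rightarrow(1)\Rightarrow(2)\Rightarrow(3)$ (or, by the evident left-right symmetry exchanging the roles of $B$ and $C$, cartesian and cocartesian, it suffices to prove $(3)\Rightarrow(1)$ and $(1)\Rightarrow(3)$, then invoke symmetry for the rest). The implication $(3)\Rightarrow(1)$ is essentially a restatement of the remark made just before the lemma: if $p$ is a lax bifibration then by definition $p_C$ is cocartesian and $p$ is a map of cocartesian fibrations over $C$, and the observation preceding the lemma already says that $p_B$ restricted to $X_{\bullet c}$ is a cartesian fibration. So the only real content is $(1)\Rightarrow(3)$ (and symmetrically $(2)\Rightarrow(3)$).

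So assume (1). I must produce cartesian lifts for $p_B$ over all of $X$ (not just inside a single fiber $X_{\bullet c}$) and show $p$ is a map of cartesian fibrations over $B$; condition (1) of the definition of lax bifibration for $p_B$, and condition (2) for the $B$-side, are then what remains, since the $C$-side of both conditions is given. The idea: take a morphism $\beta\colon b'\to b$ in $B$ and an object $x\in X$ over $(b,c)$. Using that $p_C$ is cocartesian, factor — actually, better, first use that $p$ is a map of cocartesian fibrations over $C$ so that cocartesian lifts in $X$ project to cocartesian (i.e. identity-component) morphisms in $B\times C$, hence to identities in $B$. Now given $(\beta, \id_c)\colon (b',c)\to(b,c)$, I want its cartesian lift ending at $x$. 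Since $p_C$ restricted to $X_{\bullet c}$ is... no: the subtlety is that $X_{\bullet c}$ sits over $B\times\{c\}$, and $(\beta,\id_c)$ does lie in $B\times\{c\}$, so by the hypothesis in (1) that $p_B|_{X_{\bullet c}}$ is a cartesian fibration, there is a cartesian lift $\tilde\beta\colon x'\to x$ inside $X_{\bullet c}$. The claim is that $\tilde\beta$ is $p_B$-cartesian in all of $X$, not merely in $X_{\bullet c}$. To check this, take a test morphism $\gamma\colon y\to x$ in $X$ with $p_B(\gamma)$ factoring through $\beta$; I need a unique filler. Factor $\gamma$ in $X$ using the cocartesian fibration $p_C$: write $\gamma = \gamma'' \circ \gamma'$ where $\gamma'$ is $p_C$-cocartesian (lifting $p_C(\gamma)$) and $\gamma''$ lies in a fiber $X_{\bullet c}$ — here I use that $p_C(\gamma)$ goes to $c = p_C(x)$ so $\gamma''$ is indeed in $X_{\bullet c}$; moreover since $p$ is a map of cocartesian fibrations over $C$, $\gamma'$ maps to an identity in $B$, so the entire $B$-component of $\gamma$ is carried by $\gamma''$. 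Then the cartesian property of $\tilde\beta$ inside $X_{\bullet c}$ supplies the unique filler for $\gamma''$, and composing with $\gamma'$ gives the filler for $\gamma$; uniqueness follows from uniqueness in $X_{\bullet c}$ together with the essential uniqueness of the cocartesian factorization. This shows $p_B$ is a cartesian fibration and that cartesian lifts can be taken inside the fibers $X_{\bullet c}$, which is exactly the statement that $p$ is a map of cartesian fibrations over $B$. Together with the $C$-side hypotheses of (1), this gives (3).

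In the $\infty$-categorical setting I would phrase the factorization argument via the standard fact that for a cocartesian fibration $q\colon X\to C$ and a morphism $\gamma$ in $X$ over $c_0\to c_1$, there is an essentially unique factorization $\gamma \simeq \gamma''\circ\gamma'$ with $\gamma'$ $q$-cocartesian and $\gamma''$ in the fiber $X_{c_1}$ (this is, e.g., the content of the existence of a relative adjoint / the cocartesian factorization system, cf.\ \cite{L.T}); the mapping-space computation defining "cartesian" then reduces, after transporting along this equivalence, to the mapping space in the fiber $X_{\bullet c}$, where the hypothesis applies. The main obstacle — the one step deserving care — is exactly the verification that the fiberwise-cartesian morphism $\tilde\beta$ remains cartesian in the total category $X$; this is where both parts of hypothesis (1) (the global cocartesianness of $p_C$ and the compatibility of $p$ with it over $C$) get used in an essential way, and it is where, in a model-independent treatment, one must be careful that "cartesian" is tested against the correct mapping spaces. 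Everything else is bookkeeping: $(3)\Rightarrow(1)$ is immediate from the definition plus the displayed remark, and $(2)\Rightarrow(3)$, $(1)\Leftrightarrow(2)$ follow by the $B\leftrightarrow C$ symmetry.
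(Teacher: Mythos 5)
Your proof is correct and follows essentially the same route as the paper's: the paper establishes $(1)\Rightarrow(2)$ via the mapping-space identity $\Map^{(\beta,\gamma)}(x,y)=\Map^{(\beta,c')}(\gamma_!x,y)=\Map^{(b,c')}(\gamma_!x,\beta^!y)$, which is precisely your cocartesian-factorization argument showing that a fiberwise cartesian lift remains cartesian in all of $X$ because $p_C$-cocartesian arrows project to equivalences in $B$. The only cosmetic difference is that the paper first deduces local $p_B$-cartesianness and then invokes closure under composition, whereas you test against arbitrary morphisms directly and organize the equivalences as $(1)\Leftrightarrow(3)$ plus the $B\leftrightarrow C$ symmetry.
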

\begin{proof} Let us show that (1) implies (2). 

In the formulas below we use the following notation. Given $p:X\to B\times C$,
$x,y\in X$ and $f:p(x)\to p(y)$, we denote $\Map^f(x,y)$ the fiber of the map
$\Map_X(x,y)\to\Map_{B\times C}(p(x),p(y))$ at $f$; we denote $x\to f_!(x)$ the cocartesian lifting of $f$, and $f^!(y)\to y$ the cartesian lifting of $f$ (if it exists).

For arrows $\beta:b\to b'$ in $B$ and $\gamma:c\to c'$ in $C$, and for $x\in X_{b,c}$ and $y\in X_{b',c'}$ one has
$\Map^{(\beta,\gamma)}(x,y)=\Map^{(\beta,c')}(\gamma_!x,y)=
\Map^{(b,c')}(\gamma_!x,\beta^!y)$. This formula immediately implies that the map 
$\beta^!(y)\to y$ is locally $p_B$-cartesian. Since composition of such arrows has the same form, $p_B$ is a cartesian fibration. The rest
of the properties, as well as the opposite implication,
are clear.
\end{proof}

A  lax bifibration can be described as a functor 
$B^\op\to\Cat_{/C}$ carrying each object of $B^\op$ to a 
cocartesian fibration over $C$, or, equivalently, as a 
functor $C\to\Cat_{/B}$ carrying each object of $C$ to a 
cartesian fibration over $B$.

\begin{lem}
\label{lem:dec-laxbifib}
Let $p:X\to B\times C$ be a lax bifibration. Then 
\begin{itemize}
\item $p^{[1]}:X^{[1]}\to B^{[1]}\times C^{[1]}$ is also a lax bifibration.
\item The fiber $X^{[1]}_{\beta,\gamma}$ of $p^{[1]}$, with
$\beta:b\to b'$ and $\gamma:c\to c'$ arrows in $B$ and $C$,
is naturally equivalent to the fiber product
\begin{equation}\label{eq:bifib-pres}
X_{b,c}\times_{X_{b,c'}}X^{[1]}_{b,c'}\times_{X_{b,c'}}X_{b',c'}.
\end{equation}
\end{itemize}
 \end{lem}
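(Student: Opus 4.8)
The plan is to reduce the statement about the arrow category $X^{[1]}$ to a statement about $X$ by an iterated application of the criteria in Lemma~\ref{lem:criteria-bif}. First I would recall that $[1]^{[1]}\simeq[2]$, so that for a functor $p:X\to B\times C$ the arrow category $p^{[1]}:X^{[1]}\to B^{[1]}\times C^{[1]}$ can be analyzed one variable at a time. For the first bullet, I would argue as follows: if $p_C:X\to C$ is a cocartesian fibration, then $p_C^{[1]}:X^{[1]}\to C^{[1]}$ is a cocartesian fibration (functoriality of the arrow-category construction applied to cocartesian fibrations, which follows from the model-independent facts recalled in the excerpt: a cocartesian fibration corresponds to a functor to $\Cat$, and postcomposing with $X\mapsto X^{[1]}$ produces the classifying functor of $p_C^{[1]}$). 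Likewise $p_B^{[1]}:X^{[1]}\to B^{[1]}$ is a cartesian fibration. Checking that $p^{[1]}$ is a morphism of (co)cartesian fibrations over $C^{[1]}$ (resp.\ $B^{[1]}$) is then immediate from the same functoriality, since the projection $B\times C\to C$ induces $B^{[1]}\times C^{[1]}\to C^{[1]}$. By Lemma~\ref{lem:criteria-bif}, to conclude it suffices to verify condition (1) there: for each $\gamma\in C^{[1]}$, the restriction of $p_B^{[1]}$ to the fiber $X^{[1]}_{\bullet\gamma}$ is a cartesian fibration. But $X^{[1]}_{\bullet\gamma}$ is built from the fibers $X_{\bullet c}$, $X_{\bullet c'}$ (which are cartesian over $B$ by the last sentence before Lemma~\ref{lem:criteria-bif}) together with a transition functor, and the arrow-type gluing of cartesian fibrations over $B$ is again cartesian over $B^{[1]}$; this is the technical core of the first bullet.

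For the second bullet I would compute the fiber $X^{[1]}_{\beta,\gamma}$ directly. An object of $X^{[1]}$ lying over $(\beta,\gamma)$ with $\beta:b\to b'$, $\gamma:c\to c'$ is an arrow $\xi\to\eta$ in $X$ projecting to $(\beta,\gamma)$ in $B\times C$. Since $p_C$ is a cocartesian fibration and $p_B$ a cartesian one, such an arrow factors canonically: using the cocartesian lift of $\gamma$ and the cartesian lift of $\beta$ one sees that giving $\xi\to\eta$ amounts to giving $\xi\in X_{b,c}$, $\eta\in X_{b',c'}$, and an arrow $\gamma_!\xi\to\beta^!\eta$ in the fiber $X_{b,c'}$ (exactly the Map-identity used in the proof of Lemma~\ref{lem:criteria-bif}). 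The last datum is precisely a point of $X^{[1]}_{b,c'}$ whose source maps to the image of $\xi$ under $X_{b,c}\to X_{b,c'}$ (the cocartesian transition along $\gamma$) and whose target maps to the image of $\eta$ under $X_{b',c'}\to X_{b,c'}$ (the cartesian transition along $\beta$). Assembling these compatibilities gives exactly the fiber product~\eqref{eq:bifib-pres}; I would phrase this as an equivalence of categories obtained from a pullback square of the relevant functors and check it is natural.

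The main obstacle I anticipate is the first bullet, specifically verifying condition (1) of Lemma~\ref{lem:criteria-bif} for $p^{[1]}$: one must show that after passing to arrow categories the "fiberwise cartesian over $B$" property survives. Concretely, the fiber $X^{[1]}_{\bullet\gamma}$ over a fixed $\gamma:c\to c'$ sits in a diagram involving $X_{\bullet c}$, $X_{\bullet c'}$ and the cocartesian transition functor $\gamma_!:X_{\bullet c}\to X_{\bullet c'}$ over $B$; one needs that this transition functor is a morphism of cartesian fibrations over $B$ (which is part of $p$ being a lax bifibration, via condition (2)), and that the arrow category of such a morphism is cartesian over $B^{[1]}$. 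This is the point where the hypotheses are genuinely used and where a careful argument — rather than a citation — is required. Once this is in place, everything else is formal bookkeeping with the arrow-category functor and the Map-fiber formula already established, and the fiber computation in~\eqref{eq:bifib-pres} follows by the same cocartesian/cartesian factorization used in Lemma~\ref{lem:criteria-bif}.
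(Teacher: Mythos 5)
Your proposal is correct and follows essentially the same route as the paper: the first bullet is the observation that $\Fun([1],-)$ preserves (co)cartesian fibrations and morphisms thereof, and the second is the factorization of an arrow over $(\beta,\gamma)$ through the cocartesian lift of $\gamma$ and the cartesian lift of $\beta$, which the paper packages as two applications of the sections formula $\Fun_{[1]}([1],E)=E_0\times_{E_1}E_1^{[1]}$. One remark: the detour through condition (1) of Lemma~\ref{lem:criteria-bif} --- the step you single out as the ``technical core'' --- is redundant, since by that point you have already verified both fibration conditions and both morphism conditions for $p^{[1]}$, i.e.\ the definition of lax bifibration itself.
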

\begin{proof}The functor $\Fun([1],\_)$ is well-known to preserve cartesian and cocartesian fibrations, see, for instance, \cite{H.lec}, 9.6.3.  This implies the first claim of the lemma. The fiber
$X^{[1]}_{\beta,\gamma}$ is the category of sections
$\Fun_{B\times C}([1],X)$, with $[1]\to B\times C$ given by $(\beta,\gamma)$. Recall that, for a cocartesian 
fibration $p:E\to[1]$ classified by a functor
$E_0\to E_1$, $E_i=p^{-1}(i)$ for $i=0,1$, the category of sections is described by the formula
\begin{equation}
\label{eq:sections-cocartesian}
\Fun_{[1]}([1],E)=E_0\times_{E_1}E_1^{[1]},
\end{equation}
see, for instance,~\cite{H.lec}, 9.8.5. We get the formula 
(\ref{eq:bifib-pres}) for lax  bifibrations by applying  
the formula~(\ref{eq:sections-cocartesian}) (and a similar formula for cartesian fibrations) twice. 
First of all, since $p_B$ is a cartesian fibration, we get
\begin{equation}
\label{eq:bifib-pres2}
X^{[1]}_{\beta\bullet}=X^{[1]}_{b\bullet}
\times_{X_{b\bullet}}X_{b'\bullet}.
\end{equation}
The category $X^{[1]}_{\beta,\gamma}$ that we wish to describe is the fiber at $\gamma$ of 
(\ref{eq:bifib-pres2}), so that
$$X^{[1]}_{\beta,\gamma}=X^{[1]}_{b,\gamma}\times_{X_{b,c'}}X_{b',c'}.
$$
Finally,  since the restriction of $p_C$ to 
$X_{b\bullet}$ is a cocartesian fibration, one gets
$$ X^{[1]}_{b,\gamma}=X_{b,c}\times_{X_{b,c'}}X^{[1]}_{b,c'}.$$
This proves the lemma.
\end{proof}

\begin{dfn}A map $p:X\to B\times C$ is called a bifibration
if it is a lax bifibration and the following equivalent
extra conditions are fulfilled.
\begin{itemize}
\item For any $\beta:b\to b'$ the functor 
$\beta^!:X_{b'\bullet}\to X_{b\bullet}$ preserves 
$p_C$-cocartesian arrows.
\item For any $\gamma:c\to c'$ the functor 
$\gamma_!:X_{\bullet c}\to X_{\bullet c'}$ preserves 
$p_B$-cartesian arrows.
\end{itemize}
\end{dfn}
 Thus, a bifibration $p:X\to B\times C$ is classified by a functor $B^\op\times C\to\Cat$.

\subsubsection{}
\label{sss:cor-coc-bif}
Let us calculate $\Map(\cE_f,\cE_{f'})$ for 
$f:\cC\to\cD$ and $f':\cC'\to\cD'$. We have
$\cE_f=(\cC\times[1])\sqcup^\cC\cD$, so the result can be calculated as a fiber product. We have
$$\Map(\cC\times[1],\cE_{f'})=\Map(\cC,\Fun_{[1]}([1],\cE_{f'}))=
\Map(\cC,\cC'\times_{\cD'}{\cD'}^{[1]}),$$
see the formula~(\ref{eq:sections-cocartesian}).
Therefore,
\begin{equation}
\label{eq:fun-cor-coc}
\Map(\cE_f,\cE_{f'})= 
\Map(\cC,\cC')\times_{\Map(\cC,\cD')}\Map(\cC,{\cD'}^{[1]})
\times_{\Map(\cC,\cD')}\Map(\cD,\cD').
\end{equation}

The formula~(\ref{eq:fun-cor-coc}) shows that, for any
$f:\cC\to\cC'$, $g:\cD\to\cD'$ and $X\in\Cor^\coc_{\cC',\cD}$, the morphism
$X\circ\cE_f\to X$ is a cartesian lifting of $f$,
whereas $X\to\cE_g\circ X$ is a cocartesian lifting of $g$.
This proves the following.
\begin{Lem}
The map $\partial:\Cor^\coc\to\Cat\times\Cat$ is a bifibration.
\end{Lem}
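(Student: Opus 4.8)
The plan is to read everything off the mapping-space computation~(\ref{eq:fun-cor-coc}), using that every object of $\Cor^\coc$ is equivalent to some $\cE_f$. Write the first copy of $\Cat$ (the contravariant, ``$\cC$''-variable) as $B$ and the second (the covariant, ``$\cD$''-variable) as $C$, so that we must check that $\partial_0=p_B$ is a cartesian fibration, $\partial_1=p_C$ a cocartesian fibration, that $\partial$ is a map of cartesian fibrations over $B$ and of cocartesian fibrations over $C$, and finally the extra condition in the definition of a bifibration.

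The first four points I would take straight from the paragraph preceding the statement: for $f:\cC\to\cC'$, $g:\cD\to\cD'$ and $X\in\Cor^\coc_{\cC',\cD}$, formula~(\ref{eq:fun-cor-coc}) exhibits $X\circ\cE_f\to X$ as a $p_B$-cartesian lifting of $f$ and $X\to\cE_g\circ X$ as a $p_C$-cocartesian lifting of $g$ --- and these are genuinely (co)cartesian, not merely locally so, because~(\ref{eq:fun-cor-coc}) computes the mapping space out of, resp.\ into, \emph{every} object of $\Cor^\coc$. Hence $p_B$ is a cartesian fibration and $p_C$ a cocartesian fibration. Moreover the two liftings lie over the arrows $(f,\id_\cD)$, resp.\ $(\id_\cC,g)$, of $\Cat\times\Cat$, which are cartesian for the first projection, resp.\ cocartesian for the second; so $\partial$ sends $p_B$-cartesian arrows to cartesian arrows over $B$ and $p_C$-cocartesian arrows to cocartesian arrows over $C$, and $\partial$ is a lax bifibration. (Equivalently, one feeds the observation that the cartesian lifting $X\circ\cE_f\to X$ does not move $\partial_1$ into condition~(1) of Lemma~\ref{lem:criteria-bif}.)

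It then remains to verify one of the two equivalent extra conditions, say that the cartesian transport $f^*:\Cor^\coc_{\cC',\bullet}\to\Cor^\coc_{\cC,\bullet}$, $X\mapsto X\circ\cE_f$, preserves $p_C$-cocartesian arrows. Applying $f^*$ to the cocartesian lifting $X\to\cE_g\circ X$ produces $X\circ\cE_f\to(\cE_g\circ X)\circ\cE_f$, and associativity of the composition of correspondences identifies $(\cE_g\circ X)\circ\cE_f$ with $\cE_g\circ(X\circ\cE_f)$, so this is again the cocartesian lifting of $g$, now at $X\circ\cE_f$. That closes the argument. The only ingredient beyond~(\ref{eq:fun-cor-coc}) is that the composition of correspondences is coherently associative and unital with $\cE_{(-)}$ as its ``graph'' embedding; this --- a purely formal but genuinely bookkeeping-heavy point --- is what I would not reprove but cite from~\cite{H.EY}, Sect.~8, and it is exactly here, and only here, that the genuine-versus-lax distinction of the lemma gets used.
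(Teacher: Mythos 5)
Your proposal is correct and follows essentially the same route as the paper, which likewise reads the whole statement off the mapping-space formula~(\ref{eq:fun-cor-coc}) by exhibiting $X\circ\cE_f\to X$ and $X\to\cE_g\circ X$ as the cartesian and cocartesian liftings. You merely make explicit two points the paper leaves tacit (that these liftings witness the lax bifibration conditions of Lemma~\ref{lem:criteria-bif}, and that the extra bifibration condition follows from associativity of composition of correspondences, i.e.\ $(\cE_g\circ X)\circ\cE_f\simeq\cE_g\circ(X\circ\cE_f)$), which is a faithful filling-in rather than a different argument.
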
\qed

\subsection{Kan extensions}

Let $f:\cC\to\cD$ be a functor. A right extension of
$f$ with respect to $q:\cC\to\cC'$ is a functor
$f':\cC'\to\cD$, endowed with a morphism
of functors $\theta:f'\circ q\to f$. A right Kan extension 
is, by definition, a terminal object in the category of
right extensions, see \cite{L.T}, 4.3.3 for the version for 
quasicategories.

The category $\RE_q(f)$ is defined, therefore, as the 
fiber product
\begin{equation}
\label{eq:reqf}
\RE_q(f)=\Fun(\cC',\cD)\times_{\Fun(\cC,\cD)}
\Fun(\cC,\cD^{[1]})\times_{\Fun(\cC,\cD)}\{f\},
\end{equation}
where the map $\Fun(\cC',\cD)\to\Fun(\cC,\cD)$ is given by 
composition with $q$.

\begin{dfn}
\label{dfn:rightkan}
A right Kan extension of $f:\cC\to\cD$ with respect to 
$q:\cC\to\cC'$ is a terminal object in $\RE_q(f)$.
\end{dfn}

\subsubsection{}
\label{sss:reop=cor}

Fix $f:\cC\to\cD$. Let $\partial_f:\Cor^\coc_{f/}\to\Cat_{\cC/}\times\Cat_{\cD/}$ be induced by 
$\partial:\Cor^\coc\to\Cat\times\Cat$.
Denote 
 $(\Cor^\coc_{/f})_{q}$ the fiber of $\partial_f$
at $(q,\id_\cD)$.

In \ref{ss:reop=cor}  below we prove the following.
 
\begin{Prp}\label{prp:re-cor}
One has a natural equivalence $\RE_q(f)^\op=(\Cor^\coc_{f/})_{q}$.
\end{Prp}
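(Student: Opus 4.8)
The plan is to identify both sides of the claimed equivalence as spaces of diagrams of a common shape, and then match them term by term. On the left, the formula~(\ref{eq:reqf}) already presents $\RE_q(f)$ as an explicit fiber product built from $\Fun$-categories and the map $\cC^{[1]}\to\cD$ (equivalently $\cD^{[1]}$) classifying $f$; taking the opposite of everything in sight, $\RE_q(f)^\op$ becomes the fiber product of the opposite categories. On the right, I would unwind $(\Cor^\coc_{f/})_q$ using the bifibration $\partial:\Cor^\coc\to\Cat\times\Cat$ established in the Lemma of~\ref{sss:cor-coc-bif}: an object of $\Cor^\coc_{f/}$ lying over $(q,\id_\cD)$ is a cocartesian correspondence $X$ from $\cC'$ to $\cD$ together with a map $\cE_f\to X$ in $\Cor^\coc$ whose image under $\partial$ is $(q,\id_\cD)$. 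Since $\partial$ is a bifibration, and since $\cE_f$ sits over $(\cC,\cD)$, the datum ``map $\cE_f\to X$ over $(q,\id_\cD)$'' is the same as the datum of a cocartesian correspondence from $\cC'$ to $\cD$ equipped with a map to the pushed-forward correspondence $q_*\cE_f = \cE_{q}\circ\cE_f = \cE_{q}$-composite; more usefully, by the cartesian side of the bifibration, it is the same as a map $q^*X\to\cE_f$ over $\id_\cC\times\id_\cD$, i.e.\ a map in the fiber $\Cor^\coc_{\cC,\cD}$ from $q^*X = X\circ\cE_q$ to $\cE_f$.

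So the first key step is the reduction: $(\Cor^\coc_{f/})_q \simeq \bigl(\Cor^\coc_{\cC,\cD}\bigr)_{/\,\cE_f}\times_{\Cor^\coc_{\cC',\cD}}\{\text{objects }X\}$ — more precisely, the category whose objects are pairs $(X\in\Cor^\coc_{\cC',\cD},\ \alpha:q^*X\to\cE_f)$. Now I would compute the two ingredient $\Cor^\coc$-spaces using the mapping-space formula~(\ref{eq:fun-cor-coc}). An object $X\in\Cor^\coc_{\cC',\cD}$ classifies a functor $g:\cC'\to\cD$, and by~(\ref{eq:fun-cor-coc}) with the target $\cE_f$ being a correspondence $\cC\to\cD$, a map $X\circ\cE_q\to\cE_f$ — equivalently (by the adjunction $q^*\dashv$ or directly by the bifibration) a map $X\to\cE_g\circ\cE_q$-type datum over the boundary $(q,\id_\cD)$ — boils down, after tracing through~(\ref{eq:fun-cor-coc}), to: the functor $g\circ q:\cC\to\cD$, the functor $f:\cC\to\cD$, and a morphism in $\Fun(\cC,\cD)$ between them. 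Reading off which direction the morphism goes, and comparing with the description~(\ref{eq:reqf}) of $\RE_q(f)$ — which records a morphism $g\circ q\to f$ — one finds that the correspondence encoding turns this arrow around, which is precisely the source of the ${}^\op$. Assembling: the data $(g,\ \text{arrow }f\to g\circ q)$ is exactly an object of $\RE_q(f)^\op$, via the fiber-product descriptions~(\ref{eq:reqf}) on one side and~(\ref{eq:fun-cor-coc}) on the other, and the two fiber products literally coincide after taking opposites.

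The second key step is to upgrade this bijection-on-objects to an equivalence of $\infty$-categories, i.e.\ to check it on mapping spaces and functorially. Here I would argue structurally rather than by hand: both $\RE_q(f)$ and $(\Cor^\coc_{f/})_q$ are defined as (iterated) pullbacks in $\Cat$ of diagrams built from $\Fun(-,-)$ and the correspondence construction $\cC\mapsto\cE_{(-)}$, and the mapping-space formula~(\ref{eq:fun-cor-coc}) is exactly the statement that $\Cor^\coc(\cE_f,\cE_{f'})$ is such a pullback of mapping spaces; so~(\ref{eq:fun-cor-coc}) promotes to an equivalence of correspondence-categories over $\Cat\times\Cat$, and slicing and taking fibers (both homotopy-invariant operations) preserves this. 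Concretely: the functor $f\mapsto\cE_f$ is part of the bifibration structure, hence fully faithful onto cocartesian correspondences with the appropriate boundary up to the ${}^\op$ recorded by~(\ref{eq:fun-cor-coc}); restricting to the slice under a fixed $\cE_f$ and to the fiber over $(q,\id_\cD)$ then yields the claimed equivalence without further computation.

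\emph{Main obstacle.} The delicate point is bookkeeping the variance — pinning down \emph{why} the opposite appears and making sure it appears exactly once, not zero or two times. Concretely, one must be careful that the natural map recorded by the slice $\Cor^\coc_{f/}$ is $\cE_f\to X$ (a map \emph{out of} $\cE_f$), which under~(\ref{eq:fun-cor-coc}) — where $\cE_f$ is the \emph{source} of the mapping space — corresponds to an arrow $f\to g\circ q$ in $\Fun(\cC,\cD)$, whereas a right extension~(\ref{eq:reqf}) records $g\circ q\to f$; reconciling these is the whole content of the ${}^\op$. A secondary subtlety is that arrows in $\Cor^\coc$ need not preserve cocartesian edges (as flagged right after the definition of $\Cor^\coc$), so one should double-check that the relevant mapping spaces in $\Cor^\coc$ really are the full $\Cat_{/[1]}$ mapping spaces and that~(\ref{eq:fun-cor-coc}) is being applied in the correct (unrestricted) sense — which it is, since $\Cor^\coc$ is a \emph{full} subcategory of $\Cat_{/[1]}$.
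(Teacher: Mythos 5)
Your overall strategy --- decompose $(\Cor^\coc_{f/})_{q}$ using the bifibration $\partial:\Cor^\coc\to\Cat\times\Cat$ and match the resulting fiber product against (\ref{eq:reqf}) via an identification of $\Cor^\coc_{\cC,\cD}$ with $\Fun(\cC,\cD)^\op$ --- is essentially the paper's (Lemma~\ref{lem:dec-laxbifib} for the decomposition, Lemma~\ref{lem:fun=corcoc} for the identification). But the execution has two genuine problems. First, the variance bookkeeping, which you yourself flag as the crux, comes out wrong. A map $\cE_f\to X$ over $(q,\id_\cD)$ factors through the cartesian lift as a map $\cE_f\to q^*X$ \emph{in} the fiber $\Cor^\coc_{\cC,\cD}$, not as a map $q^*X\to\cE_f$; and such a map encodes a natural transformation $g\circ q\to f$ (send $\id_{f(x)}\in\Hom_{\cE_f}(x,f(x))$ forward to get an element of $\Hom_{\cD}(g(q(x)),f(x))$), not $f\to g\circ q$. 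So the objects of $(\Cor^\coc_{f/})_{q}$ are literally right extensions $(g,\ g\circ q\to f)$, with no reversal at the level of objects. The ${}^\op$ in the statement does not flip the structure $2$-cell $\theta$ --- an opposite category has the same objects --- it reverses the \emph{morphisms} of $\RE_q(f)$, i.e.\ the natural transformations between extensions. Your claim that ``$(g,\ f\to g\circ q)$ is exactly an object of $\RE_q(f)^\op$'' is therefore wrong on both sides, and your explanation of where the op comes from misses it entirely.

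Second, the step you wave through ``without further computation'' is the actual mathematical content. Formula (\ref{eq:fun-cor-coc}) computes a mapping space between objects of the form $\cE_f$, but it does not by itself produce a natural equivalence of categories $\Fun(\cC,\cD)^\op\simeq\Cor^\coc_{\cC,\cD}$ compatible with composition: it does not even tell you which end of the factor $\Map(\cC,{\cD'}^{[1]})$ is source and which is target, let alone how these identifications cohere as $\cC$ and $\cD$ vary. The paper proves this in Lemma~\ref{lem:fun=corcoc} by comparing Rezk nerves: $\Map([n],\Fun(\cC,\cD))$ and $\Map([n],\Cor^\coc_{\cC,\cD})$ are identified with spaces of lax bifibrations over $[n]\times[1]$ and over $[n]^\op\times[1]$ respectively, with fixed boundary, and the canonical isomorphism $[n]\cong[n]^\op$ is precisely where the op enters. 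You need this lemma, or an equivalent argument; it is not a formal consequence of (\ref{eq:fun-cor-coc}).
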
 \qed

This allows one to redefine a right Kan extension
of $f:\cC\to \cD$ with respect to $q:\cC\to\cC'$ as
the initial object in $(\Cor^\coc_{f/})_{q}$.

In other words, a right
extension of $f$ along $q$ is an arrow 
$\theta:\cE_f\to\cE_{f'}$ in $\Cor^\coc$, with 
$\partial_0(\theta)=q$, $\partial_1(\theta)=\id_\cD$.

A right Kan extension is an initial object in the category
of such $\theta$'s.

\subsection{Construction of equivalence}
\label{ss:reop=cor}

We will now identify the bifibration $\Cor^\coc$
given by a functor $\Cat^\op\times\Cat\to\Cat$
carrying $(\cC,\cD)$ to $\Cor^\coc_{\cC,\cD}$, with 
the opposite to the internal Hom in $\Cat$.

\begin{lem}
\label{lem:fun=corcoc}
There is an equivalence
$$\Fun(\cC,\cD)^\op=\Cor^\coc_{\cC,\cD},$$
functorial in $\cC$, $\cD$.
\end{lem}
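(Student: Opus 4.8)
The plan is to construct the equivalence $\Fun(\cC,\cD)^\op \simeq \Cor^\coc_{\cC,\cD}$ directly by building a pair of functors in both directions and exhibiting them as mutually inverse, all in a way visibly functorial in $\cC$ and $\cD$. In one direction the functor is already implicit in the paper: it sends $f\colon\cC\to\cD$ to the cocartesian fibration $p_f\colon\cE_f\to[1]$ obtained from the Grothendieck construction, $\cE_f=(\cC\times[1])\sqcup^\cC\cD$. The content to be checked is that this assignment is itself a functor $\Fun(\cC,\cD)^\op\to\Cor^\coc_{\cC,\cD}$ — note the contravariance: a natural transformation $\alpha\colon f\to g$ should induce a map $\cE_g\to\cE_f$ over $[1]$ restricting to the identity on both fibers. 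This is exactly the content of the mapping-space computation in~\ref{sss:cor-coc-bif}: the formula~(\ref{eq:fun-cor-coc}) identifies $\Map(\cE_f,\cE_{f'})$ with a fiber product whose ``middle factor'' is $\Map(\cC,{\cD'}^{[1]})$, and chasing the fiber over $(\id_\cC,\id_\cD)$ — i.e.\ the maps that are the identity on fibers — precisely returns $\Map_{\Fun(\cC,\cD)}(f',f)$, with the variance reversed. So formula~(\ref{eq:fun-cor-coc}), specialized to $\cC=\cC'$, $\cD=\cD'$, already computes the hom-spaces of $\Cor^\coc_{\cC,\cD}$ and matches them with those of $\Fun(\cC,\cD)^\op$; this is the crux of the argument.

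Concretely I would proceed as follows. First, recall that any object of $\Cor^\coc_{\cC,\cD}$ is a cocartesian fibration $p\colon\cE\to[1]$ together with identifications $\cC\stackrel{\sim}\to\cE_0$, $\cD\stackrel{\sim}\to\cE_1$; by straightening, such a $p$ is classified by a functor $\cE_0\to\cE_1$, i.e.\ by an object of $\Fun(\cC,\cD)$, and equivalences of such data over $[1]$ fixing the fibers correspond to equivalences of functors. This gives a bijection on equivalence classes of objects and, more importantly, the straightening equivalence $\Cat_{/[1]}^{\coc}\simeq \Fun([1],\Cat)$ is an equivalence of categories, which when we fix the two fibers to be $\cC$ and $\cD$ cuts down to an equivalence between $\Cor^\coc_{\cC,\cD}$ and the space/category of functors $\cC\to\cD$ — and the remaining point is to pin down on which side the op lands. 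Second, to get the variance right and the functoriality in $\cC,\cD$ simultaneously, I would assemble everything as a map of bifibrations over $\Cat^\op\times\Cat$: the source is $\partial\colon\Cor^\coc\to\Cat\times\Cat$, shown to be a bifibration in the Lemma of~\ref{sss:cor-coc-bif}, and the target is the bifibration classifying $(\cC,\cD)\mapsto\Fun(\cC,\cD)^\op$, i.e.\ the twisted-arrow-type fibration built from the internal Hom of $\Cat$. Then I would construct the comparison fiberwise via $f\mapsto\cE_f$, check it commutes with the cartesian pushforwards $f^*=(-)\circ\cE_f$ and cocartesian pushforwards $g_*=\cE_g\circ(-)$ described earlier (these correspond on the $\Fun$ side to precomposition and postcomposition), and invoke that a map of bifibrations which is a fiberwise equivalence is an equivalence.

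The main obstacle is bookkeeping the direction of the op and the base-variances coherently: one must verify that composition of correspondences, $Y\circ X = (X\sqcup^\cD Y)$ restricted along $\delta^1\colon[1]\to[2]$, corresponds under $f\mapsto\cE_f$ to composition of functors, and — crucially — that the natural map $\cE_g\circ\cE_f\to\cE_{g\circ f}$ induced by the Grothendieck construction is an \emph{equivalence} in $\Cor^\coc$, since this is what makes $f\mapsto\cE_f$ a functor compatible with the bifibration structures. Granting the identifications~(\ref{eq:sections-cocartesian}) and~(\ref{eq:fun-cor-coc}) from the paper, this reduces to a direct but slightly delicate pushout computation; once it is in hand, the op appears exactly because in~(\ref{eq:fun-cor-coc}) the factor $\Map(\cC,\cD)$ coming from $\cD$ appears covariantly while the comparison to $\Fun(\cC,\cD)$ forces the hom-spaces to be read backwards. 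Everything else — functoriality in $\cC$ and $\cD$, and promotion from a hom-space equivalence to an equivalence of categories — is then formal via the bifibration criterion of Lemma~\ref{lem:criteria-bif}.
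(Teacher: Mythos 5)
Your mapping-space computation is correct and is indeed the heart of the matter: specializing (\ref{eq:fun-cor-coc}) to $\cC=\cC'$, $\cD=\cD'$ and passing to the fiber over $(\id_\cC,\id_\cD)$ gives $\Map_{\cE}(\cE_f,\cE_{f'})\simeq\Map_{\Fun(\cC,\cD)}(f',f)$, which explains the $\op$. But there are two genuine problems. First, the claim that the straightening equivalence $\Coc([1])\simeq\Fun([1],\Cat)$ ``cuts down'' to the desired equivalence after fixing the fibers is false: straightening identifies $\Fun([1],\Cat)$ with cocartesian fibrations and morphisms \emph{preserving cocartesian edges}, whereas the paper explicitly takes $\Cor^\coc$ with all morphisms over $[1]$. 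The fiber of $\Fun([1],\Cat)\to\Cat\times\Cat$ at $(\cC,\cD)$ is only the maximal groupoid $\Fun(\cC,\cD)^\eq$ (a square over $(\id_\cC,\id_\cD)$ in $\Cat$ is an invertible natural transformation), so the non-invertible morphisms of $\Fun(\cC,\cD)^\op$ are exactly what straightening does \emph{not} see; they come from the extra, non-cocartesian-edge-preserving maps. Straightening gives you essential surjectivity and nothing more.

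Second, and more seriously, you never actually construct the comparison functor. Knowing what the objects and the mapping spaces are does not produce a functor of $\infty$-categories: $f\mapsto\cE_f$ together with the identification of hom-spaces is not a functor until all higher coherence data is supplied, and ``assemble everything as a map of bifibrations over $\Cat^\op\times\Cat$'' just pushes the same problem up a level (you would still have to construct that map of bifibrations coherently). This is precisely the difficulty the paper's proof is designed to circumvent: it compares the Rezk nerves directly, identifying $\Map([n],\Fun(\cC,\cD))$ and $\Map([n],\Cor^\coc_{\cC,\cD})$ with spaces of lax bifibrations over $[n]\times[1]$ and $[n]^\op\times[1]$ respectively (via Lemma~\ref{lem:criteria-bif}), and then uses the canonical isomorphism $[n]\cong[n]^\op$ — which is compatible with the simplicial structure and is the source of the $\op$ — to obtain a levelwise equivalence of complete Segal spaces, hence of categories, with the functoriality in $\cC$ and $\cD$ built in. To complete your argument you would either need to carry out such a coherent construction or reduce to one; as written, the proposal establishes the object-level and hom-level statements but not the equivalence of categories.
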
 

To deduce Proposition~\ref{sss:reop=cor}, we apply
Lemma~\ref{lem:dec-laxbifib}  to $\Cor^\coc$
and the arrow $[1]\to\Cat\times\Cat$ given by the pair 
$(q,\id_\cD)$. 
\begin{proof}[Proof of \ref{lem:fun=corcoc}]
Recall an important way of presentation of categories
going back to Rezk's CSS model structure for 
$\infty$-categories.

The embedding $\Delta\to\Cat$ defines a functor $\cN:\Cat\to P(\Delta)=\Fun(\Delta^\op,\cS)$ assigning to a category $\cC$ its ``Rezk nerve'', a simplicial space carrying
$[n]$ to $\Map([n],\cC)$. The functor
$\cN$ is fully faithful; its image consists of simplicial spaces that are Segal and complete. Moreover, this embedding has a left adjoint 
$L:\Fun(\Delta^\op,\cS)\to\Cat$ which is a localization
~\footnote{A localization having fully faithful right adjoint is called Bousfield localization.}.

Let us describe the simplicial space $\cN(\Fun(\cC,\cD))$.
One has
$$ \Map([n],\Fun(\cC,\cD))=\Map_{[n]}(\cC\times[n],\cD\times[n]).$$
Lemma~\ref{lem:criteria-bif}, (1), shows that this is the space of lax bifibrations $p:X\to[n]\times[1]$ satisfying the conditions
\begin{equation}
\label{eq:boundary}
X_{\bullet0}=[n]\times\cC\textrm{ and }X_{\bullet1}=[n]\times\cD.
\end{equation}

Let us now describe $\cN(\Cor^\coc_{\cC,\cD})$. We use the cartesian 
version of Grothendieck construction and 
Lemma~\ref{lem:criteria-bif}, (2), to describe 
$\Map([n],\Cor^\coc_{\cC,\cD})$ as the space of lax bifibrations
$p:X\to[n]^\op\times[1]$
satisfying the conditions (\ref{eq:boundary}).
Using the canonical isomorphism $[n]=[n]^\op$, we get 
the required equivalence.
\end{proof}
 
\section{Main results}
\label{sec:main}

The results listed below are mostly direct consequences of the constructions of Section~\ref{sec:corr-ext}.

\begin{LEM}
Let $f:\cC\to\cD$ be as above, $q_\cC:\cC\to\cC'$ and 
$q_\cD:\cD\to\cD'$ be localizations. Let $\cE_f'$ be the 
localization of $\cE_f$ described in \ref{ss:def-derived}
and $\theta:\cE_f\to\cE_{f'}$ be a right extension of $q_\cD\circ f$ along $q_\cC$. Then $\theta$ uniquely factors
through $\cE'_f$.
\end{LEM}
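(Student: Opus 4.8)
The plan is to use the universal property of the localization $q_{\cE}:\cE_f\to\cE'_f$ together with the characterization of right extensions as arrows in $\Cor^\coc$ provided by \ref{sss:reop=cor}. First I would recall that $\cE'_f$ is, by definition, the localization of $\cE_f$ with respect to $W_\cC\cup W_\cD$, i.e.\ the image of $(\cE_f, W_\cC\cup W_\cD)$ under the functor $\cL:\Cat^+\to\Cat$ of \ref{sss:loc}; so a functor out of $\cE_f$ factors (uniquely, up to a contractible space of choices) through $q_\cE$ precisely when it sends every arrow in $W_\cC\cup W_\cD$ to an equivalence. Thus the whole statement reduces to the following single verification: the composite functor $\theta:\cE_f\to\cE_{f'}$ carries each arrow of $W_\cC$ and each arrow of $W_\cD$ to an equivalence in $\cE_{f'}$.

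To see this I would unwind what $\theta$ does on the two fibers. Since $\theta$ is an arrow in $\Cor^\coc$ with $\partial_0(\theta)=q_\cC:\cC\to\cC'$ and $\partial_1(\theta)=\id_{\cD'}$ (using $\cD=\cD'$ as permitted in the footnote to \ref{ss:def-derived}, or $q_\cD$ in general), the restriction of $\theta$ to the fiber $\cE_f{}_{|0}=\cC$ is $q_\cC$, and its restriction to $\cE_f{}_{|1}=\cD'$ is $\id_{\cD'}$ (resp.\ $q_\cD$). Hence an arrow of $W_\cC\subset\cC$ is sent to an arrow of the form $q_\cC(w)$, which is an equivalence because $q_\cC$ is the localization at $W_\cC$; and an arrow of $W_\cD$ is sent to (an equivalence equivalent to) $q_\cD(w')$, again an equivalence. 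So $\theta$ inverts $W_\cC\cup W_\cD$, and the desired factorization through $q_\cE:\cE_f\to\cE'_f$ exists and is essentially unique by the universal property of $\cL$.

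The one point that needs slightly more care — and which I expect to be the main (minor) obstacle — is that $W_\cC\cup W_\cD$, as a marked subcategory of $\cE_f$, may also contain composites of arrows lying partly in $\cC$ and partly crossing into $\cD$: for $x\in\cC$ with $f(x)$ the image, and a zig-zag built from $W_\cC$-arrows, $W_\cD$-arrows, and the structural arrows $x\to f(x)$, one must check $\theta$ still inverts everything generated. But since inversion is closed under composition and $\theta$ already inverts the generators (arrows of $W_\cC$ in the $0$-fiber, arrows of $W_\cD$ in the $1$-fiber — the crossing arrows $x\to f(x)$ are not themselves required to be inverted, only the marked ones are), there is nothing further to prove: the subcategory of $\cE_f$ generated by $W_\cC\cup W_\cD$ is sent by $\theta$ into equivalences, which is exactly the hypothesis needed to apply the universal property. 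This completes the argument.
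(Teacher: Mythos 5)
Your argument is correct and is exactly the verification the paper has in mind: the paper states this lemma with no written proof (it is listed among the ``direct consequences'' of Section~\ref{sec:corr-ext}), and the intended reason is precisely yours --- $\theta$ restricts to $q_\cC$ on the fibre over $0$ and to $q_\cD$ on the fibre over $1$, hence inverts $W_\cC\cup W_\cD$, so the universal property of the Dwyer--Kan localization $\cE_f\to\cE'_f$ gives the essentially unique factorization. Your closing worry about composites is indeed a non-issue for the reason you give (a functor inverts the subcategory generated by a set of arrows iff it inverts the generators, and the crossing arrows are not marked), so nothing is missing.
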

\qed

We denote the unique map $\cE_f'\to\cE_{f'}$ as $\theta'$.

\begin{CRL}
Let $f:\cC\to\cD$ admit a left derived functor $\Left f$.
Then the canonical equivalence $\cE_f'\to\cE_{\Left f}$
presents $\cE_{\Left f}$
as a right Kan extension of $q_\cD\circ f$ along $q_\cC$.
\end{CRL}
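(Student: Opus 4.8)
The plan is to chain together the universal properties established earlier in Section~\ref{sec:corr-ext}, using the preceding Lemma as the bridge between localization and Kan extension. First I would unwind the definitions. By Definition~\ref{dfn:left}, saying that $\Left f$ exists means precisely that the localization $\cE'_f$ of $\cE_f$ with respect to $W_\cC\cup W_\cD$ is again a cocartesian fibration over $[1]$, and then $\cE'_f\simeq\cE_{\Left f}$ as objects of $\Cor^\coc$, with $\partial_0$ giving the localization map $q_\cC:\cC\to\cC'$ and $\partial_1$ giving $q_\cD:\cD\to\cD'$ (recall the footnote convention $\cD=\cD'$, so $\partial_1$ is $\id_\cD$). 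By the reformulation in~\ref{sss:reop=cor}, to show that $\cE_{\Left f}$ is a right Kan extension of $q_\cD\circ f$ along $q_\cC$, it is equivalent to show that the canonical arrow $\theta_0:\cE_f\to\cE_{\Left f}$ in $\Cor^\coc$, with $\partial_0(\theta_0)=q_\cC$ and $\partial_1(\theta_0)=\id_\cD$, is an initial object of the category $(\Cor^\coc_{f/})_q$.

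Next I would produce the comparison map and verify it is the right one. The localization functor $\cE_f\to\cE'_f$ carries $W_\cC\cup W_\cD$ to equivalences; composing with $\cE'_f\simeq\cE_{\Left f}$ gives $\theta_0$, and one checks from the construction of $\cE'_f$ in~\ref{ss:def-derived} that its restrictions to the fibers over $0$ and $1$ are exactly $q_\cC$ and $q_\cD$, so $\theta_0$ does lie in $(\Cor^\coc_{f/})_q$. Now take an arbitrary object of that category, i.e.\ a right extension $\theta:\cE_f\to\cE_{f'}$ of $q_\cD\circ f$ along $q_\cC$. The preceding Lemma says exactly that any such $\theta$ factors uniquely through the localization $\cE'_f$, that is, through $\theta_0$, via the map we named $\theta'$. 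Spelling out the word ``uniquely'' in the $\infty$-categorical sense: the space of factorizations of $\theta$ through $\theta_0$ is contractible. This is precisely the statement that $\theta_0$ is initial in $(\Cor^\coc_{f/})_q$, and hence, by Proposition~\ref{prp:re-cor} (read through the opposite), that $\cE_{\Left f}$ is a terminal object of $\RE_{q_\cC}(q_\cD\circ f)$, i.e.\ a right Kan extension.

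The one point requiring care — the main obstacle — is checking that the Lemma really delivers a \emph{contractible} space of factorizations and not merely the existence of one, since the category $(\Cor^\coc_{f/})_q$ is an $\infty$-category and initiality is a statement about mapping spaces. This is where the universal property of localization as a Bousfield-type localization $\cL:\Cat^+\to\Cat$ from~\ref{sss:loc} does the work: $\cE_f\to\cE'_f$ is initial among functors out of $\cE_f$ inverting $W_\cC\cup W_\cD$, so for any target inverting those arrows the restriction map on mapping spaces is an equivalence; applied inside the slice/fiber $(\Cor^\coc_{f/})_q$ — whose morphisms are, by~\eqref{eq:fun-cor-coc} and the bifibration structure on $\partial:\Cor^\coc\to\Cat\times\Cat$, controlled by mapping spaces in $\Cat$ with prescribed boundary behaviour — this upgrades the set-level ``unique factorization'' of the Lemma to the contractibility we need. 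Once that is in hand the corollary follows formally, and it is worth remarking that the argument makes clear \emph{why} the implication goes only one way: the Lemma factors right extensions through $\cE'_f$, but $\cE'_f$ being a right Kan extension does not force it back into $\Cor^\coc$, which is the content of the abstract's claim that existence of a derived functor is strictly stronger than existence of a Kan extension.
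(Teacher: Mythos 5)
Your proposal is correct and matches the paper's (largely implicit) argument: the corollary is deduced from the preceding Lemma together with the identification of right Kan extensions with initial objects of $(\Cor^\coc_{f/})_{q}$ from Proposition~\ref{prp:re-cor}. Your elaboration of why the Lemma's unique factorization yields a \emph{contractible} space of factorizations --- via the universal property of the localization $\cL:\Cat^+\to\Cat$ of~\ref{sss:loc} --- is exactly the content the paper leaves to the reader.
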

That is, our left derived functors are right Kan extensions,
and right derived functors are left Kan extension. 
\footnote{It is very easy to see that they are actually absolute Kan extensions.} 
Note that our derived functors may not exist, even when
Kan extensions exist. Note also, that

\begin{PRP}
\label{prp:preserves}
Let $f:\cC\to\cD$ be a functor, $W_\cC$ and $W_\cD$
subcategories of $\cC$ and $\cD$ respectively, and let
$f(W_\cC)\subset W_\cD$. Then the composition 
$q\circ f:\cC\to\cD'$ factors uniquely through a functor 
$f':\cC'\to\cD'$ which is both left and right derived 
functor of $f$.
\end{PRP}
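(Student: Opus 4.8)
The plan is as follows. First I would observe that since $f(W_\cC)\subset W_\cD$ and $q\colon\cD\to\cD'$ inverts $W_\cD$, the composite $q\circ f$ inverts $W_\cC$; hence by the universal property of the localization $q_\cC\colon\cC\to\cC'$ it factors, uniquely, as $f'\circ q_\cC$ for a functor $f'\colon\cC'\to\cD'$. It then remains to identify $\cE'_f$ with $\cE_{f'}$ and $\cF'_f$ with $\cF_{f'}$; since $\cE_{f'}$ is by construction the cocartesian fibration over $[1]$ classified by $f'$, and $\cF_{f'}$ the cartesian one, Definitions~\ref{dfn:left} and~\ref{dfn:right} will then yield $\Left f=f'=\Right f$.

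The key observation is that, precisely under the hypothesis $f(W_\cC)\subset W_\cD$, the functor $f$ is a morphism of \emph{marked} categories $(\cC,W_\cC)\to(\cD,W_\cD)$, so that the pushout $\cE_f=(\cC\times[1])\sqcup^\cC\cD$ lifts to a pushout in $\Cat^+$,
$$(\cE_f,\,W_\cC\cup W_\cD)\ =\ (\cC\times[1],\,W_\cC\sqcup W_\cC)\ \sqcup^{(\cC,W_\cC)}\ (\cD,W_\cD),$$
with the two copies of $W_\cC$ sitting over $0$ and $1$ respectively and the map $(\cC,W_\cC)\to(\cC\times[1],W_\cC\sqcup W_\cC)$ the inclusion at $1$. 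Here one checks that the underlying category of the right-hand side is the pushout $\cE_f$ (the forgetful functor $\Cat^+\to\Cat$ has both adjoints, hence preserves colimits) and that its marking — generated by the images of the markings of the pieces — is exactly $W_\cC\cup W_\cD$, the image of the second copy of $W_\cC$ being $f(W_\cC)$, which is absorbed into $W_\cD$. Next I would apply the localization functor $\cL\colon\Cat^+\to\Cat$ of~\ref{sss:loc}; being a left adjoint it preserves this pushout, so $\cE'_f=\cL(\cC\times[1],W_\cC\sqcup W_\cC)\sqcup^{\cC'}\cD'$, using $\cL(\cC,W_\cC)=\cC'$ and $\cL(\cD,W_\cD)=\cD'$. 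One is then left with the identity $\cL(\cC\times[1],W_\cC\sqcup W_\cC)\simeq\cC'\times[1]$, which holds because both functors $(\cA,\cA^\circ)\mapsto\cL(\cA\times[1],\cA^\circ\sqcup\cA^\circ)$ and $(\cA,\cA^\circ)\mapsto\cL(\cA,\cA^\circ)\times[1]$ are left adjoints $\Cat^+\to\Cat$ whose right adjoints both carry $\cX$ to $\Fun([1],\cX)$ marked by pointwise equivalences. This gives $\cE'_f\simeq(\cC'\times[1])\sqcup^{\cC'}\cD'=\cE_{f'}$, and the right-hand case $\cF'_f\simeq\cF_{f'}$ is obtained identically from $\cF_f=\cD\sqcup^\cC(\cC\times[1])$ (now with $\cC$ included at $0$).

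The step I expect to be the crux is verifying that the marking induced on the pushout $\cE_f$ (resp.\ $\cF_f$) is exactly $W_\cC\cup W_\cD$ and not something larger — this is the one and only place where the hypothesis $f(W_\cC)\subset W_\cD$ is genuinely used; once it is in hand, everything is forced by the fact that $\cL$ preserves colimits. If one prefers to sidestep the auxiliary identity $\cL(\cC\times[1],-)\simeq\cC'\times[1]$, an alternative is to check directly that the canonical comparison $\cE_f\to\cE_{f'}$ satisfies the universal property of the localization: unwinding the pushout description, a functor $\cE_f\to\cX$ inverting $W_\cC\cup W_\cD$ amounts to a pair of functors $\cC'\to\cX$, $\cD'\to\cX$ together with a natural transformation between the two resulting functors $\cC\to\cX$, and this transformation automatically descends to $\cC'$ because it sends each $w\in W_\cC$ to a commutative square whose two vertical edges become equivalences — one of them precisely because $f(w)\in W_\cD$.
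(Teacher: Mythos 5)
Your proposal is correct and follows essentially the same route as the paper: both arguments present $\cE'_f$ as a pushout using the fact that localization (being a left adjoint $\cL:\Cat^+\to\Cat$) commutes with the pushout decomposition of $\cE_f$, and then identify the result with $\cE_{f'}$ — your ``alternative'' via testing against $\Fun(-,\cX)$ is in fact verbatim the paper's own comparison step. Your extra lemma $\cL(\cC\times[1],W_\cC\sqcup W_\cC)\simeq\cC'\times[1]$ and the explicit bookkeeping of markings on the pushout are just a more detailed packaging of what the paper summarizes as ``by universality of localization.''
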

\begin{proof}
We have to verify that the canonical map
$\theta':\cE_f'\to\cE_{f'}$ is an equivalence.
By universality of localization, 
\begin{equation}
\cE_f'=\cC'\sqcup^\cC(\cC\times[1])\sqcup^\cC\cD',
\end{equation}
with the map $\cC\to\cD'$ given by a composition $q\circ f$.
We will show that the natural map from the expression above
to $\cE_{f'}=(\cC'\times[1])\sqcup^{\cC'}\cD'$ is an equivalence. It is enough, for any $\cX$, to prove that
the induced map
\begin{equation}
\Fun(\cE_{f'},\cX)\to\Fun(\cE'_f,\cX)
\end{equation}
is an equivalence. It is easy to see that, by universality
of localization, both identify with the full subcategory
of $\Fun(\cE_f,\cX)$ spanned by the functors carrying
$W_\cC$ and $W_\cD$ to equivalences in $\cX$.
\end{proof}
 
\subsection{Existence}
\label{sss:existence}
We will now prove that derived functors exist reasonably often.

Keeping the previous notation, let now $i:\cC_0\to\cC$ be a functor. We denote $W_0=i^{-1}(W_\cC)$ and we assume that
the composition $f\circ i:\cC_0\to\cD$ preserves weak equivalences. The commutative diagram
\begin{equation}
\xymatrix{
&\cC_0\ar^{f\circ i}[r]\ar^i[d] &\cD\ar^\id[d]\\
&\cC\ar^f[r] &\cD
}
\end{equation}
gives rise to a map $\alpha:\cE_{f\circ i}\to\cE_f$ in $\Cor$, and, after the localization, to a map $\alpha':\cE'_{f\circ i}\to\cE'_f$.

\begin{prp}
\label{prp:exist}
Let $f:\cC\to\cD$, $W_\cC$ and $W_\cD$ be as above.
Assume there exists a functor $i:\cC_0\to\cC$ satisfying the
following properties. 
\begin{itemize}
\item The composition $f\circ i:\cC_0\to\cD$ preserves weak equivalences.
\item The map $i$ induces an equivalence $i':\cC_0'\to\cC'$. 
\item A right Kan extension of $q_\cD\circ f$ along
$q_\cC$ exists. It is given by a map $\theta:\cE_f\to\cE_{f'}$, where $f':\cC'\to\cD'$, such that the composition
$\theta'\circ\alpha':\cE'_{f\circ i}\to\cE'_f\to\cE_{f'}$
is an equivalence.
\end{itemize}
Then the left derived functor $\Left f$ exists (and can be calculated as $(f\circ i)'\circ {i'}^{-1}$).
\end{prp}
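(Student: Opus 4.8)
The statement to be proved is that $\cE'_f$ is a cocartesian fibration over $[1]$; the formula for $\Left f$ will drop out of the identifications made along the way. The plan is as follows. First I would put the hypotheses to work: since $f\circ i$ preserves weak equivalences, Proposition~\ref{prp:preserves} applies to it (with the subcategories $W_0\subset\cC_0$ and $W_\cD\subset\cD$) and identifies $\cE'_{f\circ i}$ with the cocartesian fibration $\cE_{(f\circ i)'}$ classifying the derived functor $(f\circ i)'\colon\cC_0'\to\cD'$. Using the equivalence $i'\colon\cC_0'\to\cC'$ I would transport this along $i'$ to a cocartesian fibration over $[1]$ with fibres $\cC'$ and $\cD'$, classifying $(f\circ i)'\circ(i')^{-1}$. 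Under these identifications $\alpha'$ becomes a functor over $[1]$ from a cocartesian fibration with the expected fibres into $\cE'_f$, equal to the equivalence $i'$ on the fibre over $0$ and to $\id_{\cD'}$ on the fibre over $1$; on the other side, the Lemma and Corollary at the beginning of this section (consequences of Proposition~\ref{prp:re-cor}) furnish the factorisation $\theta'\colon\cE'_f\to\cE_{f'}$ of $\theta$ through the localization, and exhibit $\theta'$ as the universal fibre-wise map from $\cE'_f$ to a cocartesian fibration.

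Next I would observe that $\alpha'$ is already close to being an equivalence: being $i'$ on the fibre over $0$ and $\id_{\cD'}$ on the fibre over $1$, it is essentially surjective and restricts to an equivalence on each fibre. Hence $\cE'_f$ will be a cocartesian fibration, equivalent to $\cE'_{f\circ i}$ and therefore classified by $(f\circ i)'\circ(i')^{-1}$, as soon as $\alpha'$ is fully faithful; and fullness and faithfulness are automatic on the mapping spaces lying over a single object of $[1]$ (using that $i'$ is an equivalence) and on those from the fibre over $1$ to the fibre over $0$ (both empty). So the whole matter reduces to the "mixed" mapping spaces: for $c_0\in\cC_0'$ and $d\in\cD'$ one must show that the map
\[
\Map_{\cE'_{f\circ i}}(c_0,d)\longrightarrow\Map_{\cE'_f}(i'c_0,d)
\]
induced by $\alpha'$ is an equivalence.

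This last point is where the third hypothesis enters and is the main obstacle. Applying $\theta'$ one gets a factorisation $\Map_{\cE'_{f\circ i}}(c_0,d)\to\Map_{\cE'_f}(i'c_0,d)\to\Map_{\cE_{f'}}(i'c_0,d)$ whose composite is $\theta'\circ\alpha'$ on mapping spaces, hence an equivalence; moreover, since $\theta'\circ\alpha'$ is an equivalence of cocartesian fibrations restricting to $i'$ over $0$ and to the identity over $1$, it also yields $f'\circ i'\simeq(f\circ i)'$, so that $\Map_{\cE_{f'}}(i'c_0,d)\simeq\Map_{\cD'}((f\circ i)'c_0,d)\simeq\Map_{\cE'_{f\circ i}}(c_0,d)$ and the first arrow above is thus split mono while the second is split epi. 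What remains is to promote this formal splitting to an actual equivalence, and this is the crux. I would do it by computing $\Map_{\cE'_f}(i'c_0,d)$ directly from the presentation $\cE'_f=\cC'\sqcup^{\cC}(\cC\times[1])\sqcup^{\cC}\cD'$ (valid by universality of localization, exactly as in the proof of Proposition~\ref{prp:preserves}), which rewrites this mapping space as a colimit indexed by $\cC$ built from $\Map_{\cC'}(i'c_0,q_\cC-)$ and $\Map_{\cD'}(q_\cD f(-),d)$; the map induced by $\alpha'$ then becomes the canonical comparison map between the analogous colimit over $\cC_0$ and this one over $\cC$, and the third hypothesis is precisely the assertion that this comparison is an equivalence. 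Granting this, $\alpha'$ is fully faithful, hence an equivalence, so $\cE'_f\simeq\cE'_{f\circ i}$ is a cocartesian fibration over $[1]$ and its classifying functor is $(f\circ i)'\circ(i')^{-1}$, which is therefore $\Left f$.
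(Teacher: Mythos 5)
Your overall strategy---prove that $\alpha'\colon\cE'_{f\circ i}\to\cE'_f$ is an equivalence, so that $\cE'_f$ inherits the cocartesian property from $\cE'_{f\circ i}\simeq\cE_{(f\circ i)'}$---is a legitimate route and genuinely different from the paper's, and the reductions you perform (essential surjectivity; full faithfulness over each vertex of $[1]$; the identification of $\cE'_{f\circ i}$ via Proposition~\ref{prp:preserves}; the observation that hypothesis (3) makes $\Map_{\cE'_{f\circ i}}(c_0,d)\to\Map_{\cE'_f}(i'c_0,d)$ split mono and $\Map_{\cE'_f}(i'c_0,d)\to\Map_{\cE_{f'}}(i'c_0,d)$ split epi) are correct. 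But the crux is exactly where the argument stops being a proof. A situation $A\to B\to C$ with $A\to C$ an equivalence does not force $A\to B$ to be one, and neither of your two devices for closing the gap works as stated. First, mapping spaces in the pushout $\cC'\sqcup^{\cC}(\cC\times[1])\sqcup^{\cC}\cD'$ are not computed by the colimit you describe: morphism spaces in a pushout of ($\infty$-)categories involve arbitrary zigzags, and a coend-type formula for the mixed mapping spaces would essentially presuppose that the correspondence is cocartesian, which is what is being proved. Second, and more importantly, hypothesis (3) is a statement about the composite $\theta'\circ\alpha'$ with target $\cE_{f'}$ (the Kan extension), not about the comparison map with target $\cE'_f$ (the localization); saying that it ``is precisely the assertion that this comparison is an equivalence'' conflates the two maps whose difference is the whole point of the proposition.

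The paper closes this gap without ever computing a mapping space, by a universal-property argument you almost set up but do not use: put $\psi=\alpha'\circ(\theta'\circ\alpha')^{-1}\colon\cE_{f'}\to\cE'_f$. Then $\theta'\circ\psi$ is an endomorphism of $\cE_{f'}$ in $(\Cor^\coc_{f/})_q$ and hence equivalent to the identity, since the right Kan extension is the initial object there (Proposition~\ref{prp:re-cor}); and $\psi\circ\theta'$ is an endomorphism of $\cE'_f$ under $\cE_f$ with the prescribed boundary, hence equivalent to the identity by the universal property of the localization. Thus $\theta'$ is an equivalence, $\cE'_f\simeq\cE_{f'}$ is cocartesian, and the formula $\Left f\simeq(f\circ i)'\circ(i')^{-1}$ follows from $\theta'\circ\alpha'$ being an equivalence, as you noted. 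If you want to retain your route through $\alpha'$, the fix is the same in spirit: produce an inverse to $\alpha'$ from $(\theta'\circ\alpha')^{-1}\circ\theta'$ and verify the two composites are identities by initiality, rather than attacking the mapping spaces of the localization directly.
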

\begin{proof}
We have to verify that the canonical map 
$\theta':\cE_f'\to\cE_{f'}$ obtained by the localization of 
the right Kan extension $\theta:\cE_f\to\cE_{f'}$, is an equivalence. Since $\theta'\circ\alpha'$ is an equivalence,
its inverse, composed with $\alpha':\cE'_{f\circ i}\to\cE'_f$, yields a map $\cE_{f'}\to\cE'_f$ 
in the opposite direction.

Both compositions are equivalent to identity as 
 $\cE_{f'}$ is universal in $(\Cor^\coc_{f/})_q$ and 
$\cE'_f$ is universal in $(\Cor_{f/})_q:=
\Cor_{f/}\times_{\Cat_{\cC/}\times\Cat_{\cD/}}\{(q,\id_\cD)\}$. 
\end{proof}

Let now $f:\cC\to\cD$ be a functor between model categories. Applying \ref{prp:exist} to $\cC_0$ the subcategory of fibrant (resp., cofibrant) objects in $\cC$,
we get the following.

\begin{crl}
\label{crl:exist}
Left (resp., right) Quillen functors admit a left (resp., right) derived functor in the sense of Definition~\ref{dfn:left} (resp., \ref{dfn:right}).
\end{crl}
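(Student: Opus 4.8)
The plan is to deduce Corollary~\ref{crl:exist} directly from Proposition~\ref{prp:exist} by choosing $i:\cC_0\to\cC$ to be the inclusion of the full subcategory of cofibrant objects (for the left derived functor of a left Quillen functor $f$; the right version is dual, using fibrant objects and Proposition~\ref{prp:right} with $\cF_f$). So the whole task reduces to checking the three bullet points of Proposition~\ref{prp:exist} for this particular $\cC_0$.

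First I would verify the first bullet: $f\circ i$ preserves weak equivalences. This is exactly Ken Brown's lemma — a left Quillen functor sends weak equivalences between cofibrant objects to weak equivalences. Second, the map $\cC_0'\to\cC'$ is an equivalence: this is the standard fact that the localization of a model category at its weak equivalences is equivalent to the localization of its subcategory of cofibrant objects (the inclusion of cofibrant objects admits, up to weak equivalence, a homotopy inverse given by cofibrant replacement, and the two composites are connected to the identity by natural weak equivalences, so both functors become mutually inverse equivalences after localization). Both of these are classical model-category statements which, by the model-independence discussion in Section~\ref{sec:main}'s preamble, transfer to the $\infty$-categorical setting.

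The third bullet is the real content, and I expect it to be the main obstacle: I must produce a right Kan extension $\theta:\cE_f\to\cE_{f'}$ of $q_\cD\circ f$ along $q_\cC$ whose localized-and-restricted composite $\theta'\circ\alpha':\cE'_{f\circ i}\to\cE_{f'}$ is an equivalence. The natural candidate for $f':\cC'\to\cD'$ is the functor induced on localizations by $f\circ i$ under the equivalence $i':\cC_0'\xrightarrow{\sim}\cC'$, i.e. $f'=(f\circ i)'\circ i'^{-1}$; equivalently, the total left derived functor in the usual sense, computed via cofibrant replacement. Using Proposition~\ref{prp:preserves} applied to $f\circ i$ (which preserves weak equivalences), the map $\alpha:\cE_{f\circ i}\to\cE_f$ localizes to $\alpha':\cE'_{f\circ i}\to\cE'_f$, and $\cE'_{f\circ i}\simeq\cE_{f'}$ canonically since $f\circ i$ preserves weak equivalences and $\cC_0'\simeq\cC'$. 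So $\theta'\circ\alpha'$ becomes identified with this canonical equivalence, which gives the last bullet — provided I can exhibit $\theta$ as an actual right Kan extension. For that, I would argue that $\theta:\cE_f\to\cE_{f'}$, built from the commutative square with top row $q_\cD f\circ(\text{id})$ using cofibrant replacement as the comparison, is terminal in $\RE_{q_\cC}(q_\cD f)^\op=(\Cor^\coc_{f/})_{q_\cC}$: the comparison $2$-cell $f'\circ q_\cC\to q_\cD\circ f$ is an equivalence on $\cC_0$ (by the first bullet and the definition of $f'$) and $q_\cC$ inverts the cofibrant-replacement maps, so for any competing right extension the required factorization exists and is unique, the universality reducing to universality of the localization $q_\cC$ together with the fact that cofibrant replacement is homotopy-final.

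In short: the proof is "apply Proposition~\ref{prp:exist} with $\cC_0=$ cofibrant objects"; the two easy bullets are Ken Brown's lemma and the cofibrant-replacement equivalence of localizations; the one delicate bullet is the existence of the right Kan extension of $q_\cD\circ f$ along $q_\cC$ together with the identification of its restriction to $\cE'_{f\circ i}$ with an equivalence, and that is where I would spend the effort, likely by invoking the standard construction of total derived functors via functorial cofibrant replacement and checking it satisfies the universal property encoded in Proposition~\ref{prp:re-cor}. The dual statement for right Quillen functors follows verbatim with $\cF_f$, fibrant objects, fibrant replacement, and Definition~\ref{dfn:right} in place of their left-handed counterparts.
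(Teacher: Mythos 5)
Your proof takes essentially the same route as the paper: the paper's entire argument for this corollary is the single sentence ``Applying Proposition~\ref{prp:exist} to $\cC_0$ the subcategory of fibrant (resp., cofibrant) objects,'' and you do exactly that, additionally spelling out the verification of the three hypotheses (Ken Brown's lemma, the equivalence of localizations induced by $\cC_0\to\cC$, and the right Kan extension condition) that the paper leaves implicit. One cosmetic remark: the paper pairs ``fibrant'' with ``left'' (apparently a slip), whereas your pairing of cofibrant objects with left Quillen functors is the standard, correct one.
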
\qed

\subsection{Functoriality}
\label{ss:functoriality}

\subsubsection{Flat fibrations}
Recall that a functor $p:\cE\to[2]$ is called a flat fibration, if the natural map
$$ \cE_{\{0,1\}}\sqcup^{\cE_1}\cE_{\{1,2\}}\to\cE$$
where $\cE_i$, resp., $\cE_{\{i,j\}}$, are defined  by
base change of $\cE$ with respect to $\{i\}\to[2]$,
resp., $\{i,j\}\to[2]$, is an equivalence. More generally, a map $p:\cE\to B$
is a flat fibration if any base change of $p$ with respect
to $[2]\to B$ is flat in the above sense. An important
property of flat fibrations, see~\cite{L.HA},B.4.5, says
that, if $\cE\to\ B$ is flat, the base change functor
$\Cat_{/B}\to\Cat_{/\cE}$ preserves colimits.

\subsubsection{}
Derived functor of a composition is not, in general, the
composition of derived functors. It is interesting to see 
what is going on here, in terms of correspondences. A composable pair of functors 
$f:\cE_0\to\cE_1,\ g:\cE_1\to\cE_2$ is given by 
a cocartesian fibration $p:\cE\to[2]$. Given a subcategory
$\cW\subset\cE$ over $[2]^\eq=\{0\}\cup\{1\}\cup\{2\}$,
the localization $\cE'$ is flat over $[2]$, as the universal property of localization gives the presentation
$$
\cE'=\cE'_0\sqcup^{\cE_0}\cE_{\{0,1\}}\sqcup^{\cE_1} \cE'_1\sqcup^{\cE_1}\cE_{\{1,2\}}\sqcup
^{\cE_2}\cE'_2,
$$
as well as presentations
$$
\cE'_{0,1}=\cE'_0\sqcup^{\cE_0}\cE_{\{0,1\}}\sqcup^{\cE_1}
\cE'_1
\textrm{ and }
\cE'_{1,2}=\cE'_1\sqcup^{\cE_1}\cE_{\{1,2\}}\sqcup^{\cE_2}
\cE'_2.
$$

If $\Left f$ and $\Left g$ exist,
$\cE'$ is therefore a cocartesian fibration, that is, it
is classified by the pair of functors $\Left f$, $\Left g$.

The base change of the localization map $\cE\to\cE'$ with respect to $\delta^1:[1]\to[2]$ yields a map 
\begin{equation}
\cE_{g\circ f}\to\cE_{\Left g\circ\Left f},
\end{equation} 
which induces a map 
\begin{equation}
\label{eq:leftgf}
\cE'_{g\circ f}\to\cE_{\Left g\circ\Left f}.
\end{equation}
This map is not necessarily an equivalence
~\footnote{Thus, the localization does not always commute 
with the base change.}. 

If the canonical map (\ref{eq:leftgf}) is an equivalence,
the left derived functor of $g\circ f$ is defined and
$\Left g\circ\Left f=\Left(g\circ f)$.

\subsubsection{Deriving a  family of functors}
\label{sss:family}

Given a functor $F:B\to\Cat$, we can convert it into
a cocartesian fibration $p:\cE\to B$. Given a subcategory
$W$ in $p^{-1}(B^\eq)$, we can define $p':\cE'\to B$
as the localization of $\cE$ with respect to $W$.

\begin{Dfn}
We say that the functor $F$ is left derivable with respect to $W$ if 
\begin{itemize}
\item $p':\cE'\to B$ is a cocartesian fibration.
\item For each $a:[1]\to B$ the base change of $\cE\to\cE'$
with respect to $a$ remains a localization.
\end{itemize}
\end{Dfn}

Thus, a derivable functor $F:B\to\Cat$ gives rise to a new functor $F':B\to\Cat$ defined by the formulas
$$ F'(b)=F(b)',\ F'(\alpha)=\Left F(\alpha).$$

It is obvious that, when $F:B\to\Cat$ is left derivable, 
one has an equivalence
$$ \Left F(\gamma)\stackrel{\sim}{\to}\Left F(\beta)
\circ\Left F(\alpha)$$
for any commutative triangle with edges $\alpha,\beta$
and $\gamma=\beta\circ\alpha$ in $B$. The following result shows that the converse is also true.

\begin{prp} 
\label{prp:functor}Let $p:\cE\to B$ be a cocartesian fibration
classified by a functor $F:B\to\Cat$. Let
$W$ be a subcategory of $p^{-1}(B^\eq)$. Assume that
\begin{itemize}
\item[1.] For any $a:[1]\to B$ the composition 
$F\circ a:[1]\to\Cat$ defines a functor having a left derived functor with respect to $W$.
\item[2.] For any $b:[2]\to B$ with edges
$\alpha,\beta$
and $\gamma=\beta\circ\alpha$, the natural map
$$  \Left F(\gamma)\stackrel{\sim}{\to}\Left F(\beta)
\circ\Left F(\alpha)$$
is an equivalence.
\end{itemize}
Then $F$ is left derivable with respect to $W$.
\end{prp}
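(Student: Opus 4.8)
The plan is to verify the two defining conditions of left derivability separately, reducing both to statements about base changes along maps $[1]\to B$ and $[2]\to B$, where hypotheses~1 and~2 apply. Throughout I would work with the cocartesian fibration $p:\cE\to B$ and its localization $p':\cE'\to B$ with respect to $W\subset p^{-1}(B^\eq)$, and I would freely use that localization is computed by a pushout as in~\ref{sss:loc} and commutes with the colimits involved, exactly as in the analysis of flat fibrations over $[2]$ above.

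First I would prove that $p'$ is a cocartesian fibration. By the reformulation of cocartesian fibrations over $B$ as a condition checkable after base change along all arrows $\delta\colon[1]\to B$ and all $2$-simplices $\sigma\colon[2]\to B$ (one needs cocartesian lifts of every arrow and composability of those lifts), it suffices to understand $\cE'\times_B[1]$ and $\cE'\times_B[2]$. For an arrow $a\colon[1]\to B$, hypothesis~1 says precisely that the base change of $\cE\to\cE'$ along $a$ is again a localization and that the localized object is a cocartesian fibration over $[1]$; this gives the cocartesian lifts. For a $2$-simplex $b\colon[2]\to B$, I would first base change $\cE$ to a cocartesian fibration over $[2]$, localize it, and invoke the flat-fibration presentation displayed just before~\ref{sss:family}: the localization $\cE'\times_B[2]$ (which agrees with $(\cE\times_B[2])'$ because localization commutes with the flat base change $\Cat_{/[2]}\to\Cat_{/\cE\times_B[2]}$, by~\cite{L.HA}, B.4.5 — here one uses that $\cE\times_B[2]\to[2]$ is flat, being cocartesian) is built by gluing the $\cE'_{\{i,i+1\}}$ along the $\cE'_i$. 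Hypothesis~2, which identifies $\Left F(\gamma)$ with $\Left F(\beta)\circ\Left F(\alpha)$, is exactly what makes this glued object a cocartesian fibration over $[2]$, i.e.\ makes the map~\eqref{eq:leftgf} for the pair $(\alpha,\beta)$ an equivalence. Composability of cocartesian lifts over $[2]$ is then automatic.

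Second I would prove that for each $a\colon[1]\to B$ the base change of $\cE\to\cE'$ along $a$ remains a localization — but this is now immediate, since it is literally the first bullet of hypothesis~1. (One should note that one needs the base change of $\cE'\to B$ along $a$, not the localization of $\cE\times_B[1]$; these agree by the same flatness argument as above, using that $\cE\to B$, and hence $\cE\times_B[1]\to[1]$, is a cocartesian and in particular flat fibration, so $\Cat_{/B}\to\Cat_{/\cE}$ preserves the pushouts defining the localization.)

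The main obstacle is the compatibility of localization with base change: a priori $(\cE\times_B S)'$ and $\cE'\times_B S$ need not coincide, and the whole argument hinges on their agreement for $S=[1]$ and $S=[2]$. I expect this to follow from flatness of $p$ via~\cite{L.HA}, B.4.5, together with the explicit pushout description of the localization, since all the pushouts in question are preserved by the base-change functor along a flat map. Once this is in hand, the rest is a routine check of the "fiberwise + composability" criterion for cocartesian fibrations over $B$, with hypotheses~1 and~2 supplying exactly the two pieces of data needed over $[1]$ and over $[2]$ respectively.
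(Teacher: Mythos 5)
There is a genuine gap, and it sits exactly at the point your proposal labels ``immediate''. Your whole reduction to edges and $2$-simplices of $B$ rests on the identification $(\cE\times_B S)'\simeq\cE'\times_B S$ for $S=[1],[2]$, which you justify by flatness. But the flatness you invoke is of the wrong map: \cite{L.HA}, B.4.5 applied to the flat map $\cE\to B$ (or to $\cE\times_B[2]\to[2]$) says that $\times_B\cE$ preserves colimits, whereas what you need is that $\times_B S$, i.e.\ base change along $a:S\to B$, preserves the colimit computing the localization $\cE'$. That would require $a$ itself to be flat, and a general edge of $B$ is not flat: already $\{0,2\}\hookrightarrow[2]$ fails, since $\{0\}\sqcup\{2\}\to\{0,2\}$ is not an equivalence. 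This is precisely the phenomenon flagged in the footnote to~(\ref{eq:leftgf}): localization does not commute with base change in general, and its failure along long edges is the entire reason hypothesis~2 exists. You do invoke hypothesis~2 over $[2]$, but you declare the $[1]$ case to be ``literally the first bullet of hypothesis~1'', which it is not: hypothesis~1 concerns the localization of $\cE\times_B[1]$, while the definition of left derivability concerns the base change of $\cE'$ along $a$, and identifying the two for an arbitrary edge is the actual content of the proposition, not a routine check.

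The paper's proof is organized to avoid exactly this trap. It first treats $B=[n]$ by induction, using only the maps $s:[1]\to[n]$ (the initial edge) and $t:[n-1]\to[n]$ (the last face), which \emph{are} flat, so base change along them does preserve the localization colimit; the long edge $0\to n$ is then handled not by flatness but by hypothesis~2 applied to the triangle $\gamma=\beta\circ\alpha$. For general $B$ it writes $B=\colim_{\Delta_{/B}}[n]$, uses that localization is a left adjoint and hence commutes with this colimit to get $\cE'=\colim\,(\cE_a)'$, and then uses the descent equivalence $\Coc(B)\simeq\lim\Coc([n])$ both to recognize this colimit as a cocartesian fibration and to identify $[n]\times_B\cE'$ with $(\cE_a)'$. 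If you want to keep your edge-and-triangle strategy, you must supply an independent proof of the base-change versus localization compatibility over every edge, and any such proof will have to use hypothesis~2 in an essential, global way rather than only over $2$-simplices.
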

\begin{proof}
The proof will proceed as follows. First of all, we
will verify the claim for $B=[n]$. Then we will deduce
it for a general $B$, using a presentation of $B$ as a colimit.

The claim is vacuous for $B=[0], [1]$. Let us verify it
for $B=[n]$. The maps $s:[1]\to[n]$ and $t:[n-1]\to [n]$
given by the formulas $s(0)=0,\ s(1)=1,\ t(i)=i+1$, are 
flat. $\cE'$ is obtained by localizing $\cE$ with respect to
$W=\sqcup W_i$ where $W_i\subset\cE_i=p^{-1}(\{i\})$,
$i\in[n]$. This allows to present $\cE'$ as a colimit
of the diagram
\begin{equation}
\xymatrix{
&{} &{\sqcup_i\cE_i}\ar[dr]\ar[dl] &{}\\
&\cE &{} &{\sqcup_i\cE'_i}.
}
\end{equation}
The base change with respect to flat $s$ and $t$ preserves colimits. We deduce that, by induction, $p':\cE'\to[n]$
is a cocartesian fibration.

Now, given $c:[1]\to[n]$, we have to verify that the 
base change of $\cE\to\cE'$ with respect to $c$,
$\cE_c\to\cE'_c$, is a localization. By induction, it is sufficient to assume that $c(0)=0,\ c(1)=n$. 
Look at the commutative triangle $\gamma=\beta\circ\alpha$
in $[n]$ with $\gamma:0\to n$, $\alpha:0\to 1$ and 
$\beta:1\to n$. We denote by $a:[1]\to[n]$ and $b:[1]\to
[n]$ the maps corresponding to  $\alpha$ and $\beta$.

By induction we can assume that $\cE_a\to \cE'_a$ and 
$\cE_b\to \cE'_b$ are localizations, which gives,
by Condition 2 applied to the commutative triangle
$\gamma=\beta\circ\alpha$, that $\cE_c\to\cE'_c$ is a localization.
 
We now treat the case of arbitrary $B$. 
Recall that the Grothendieck construction provides a 
canonical identification between the category of 
cocartesian fibrations $\Coc(B)$ over $B$ and the category 
of functors $\Fun(B,\Cat)$. This implies that, for 
$B=\colim B_i$,
one has an equivalence $\eta:\Coc(B)\to\lim\Coc(B_i)$.
The map $\eta$ is defined by the compatible collection of 
base change maps $\Coc(B)\to\Coc(B_i)$ for each $B_i\to B$. 
Let us describe a quasi-inverse map 
$\theta:\lim\Coc(B_i)\to\Coc(B)$. Given a  compatible collection  of cocartesian fibrations 
$\cE_i\to B_i$, it assigns a cocartesian fibration $p:\cE\to B$ endowed with a compatible collection of equivalences
$\cE_i\to B_i\times_B\cE$. Since $p:\cE\to B$ is flat, 
the fiber product $\times_B\cE$ preserves colimits, so
the collection of maps $\cE_i\to\cE$ is a colimit diagram.
Thus, $\theta(\{\cE_i\})=\colim\cE_i$.

Thus, given a compatible
collection of cocartesian fibrations $p_i:\cE_i\to B_i$,
one has $\cE_i=\cE\times_BB_i$ where $B=\colim B_i$ and 
$\cE=\colim\cE_i$.

Let now $\Delta_{/B}$ denote the full subcategory
of $\Cat_{/B}$ spanned by the arrows $[n]\to B$. 
Denote  $\pi:\Delta_{/B}\to\Cat$ the functor carrying
$a:[n]\to B$ to $[n]\in\Cat$. It is standard that $B=\colim(\pi)$~\footnote{$\Cat$ is a full subcategory of $P(\Delta)$ and $B=\colim(\pi)$ where the colimit is calculated in $P(\Delta)$. To calculate the colimit in $\Cat$, one has to make a complete Segal replacement, which is not needed since $B$ is already complete and Segal simplicial space.}.

Therefore, the functor $\rho:\Delta_{/B}\to\Cat$ carrying
$a:[n]\to B$ to $\cE_a=[n]\times_B\cE$, has the colimit
$\cE=\colim(\rho)$. Recall that $\cE$ has a marking defined by the subcategory $W\subset p^{-1}(B^\eq)$. The arrows
in $\Delta_{/B}$ preserve the markings, so $\rho$ is actually a functor from $\Delta_{/B}$ to $\Cat^+$, the category of marked categories, see~\ref{sss:loc}. We denote
$\rho':\Delta_{/B}\to\Cat$ the composition of $\rho$ with the localization $\cL:\Cat^+\to\Cat$. Since localization commutes with colimits, we have $\colim(\rho')=\cE'$.
This implies that for any $a:[n]\to B$ the localization
of $\cE_a=[n]\times_B\cE$ is $\cE'_a=[n]\times_B\cE'$.
\end{proof}

\subsubsection{Deriving a family of adjoint pairs of functors}

A family of adjoint pairs of functors is just a functor
$F:B\to\Cat$, such that for each arrow $\alpha:b\to b'$ in 
$B$ the functor $F(\alpha)$ has a right adjoint. 
equivalently, this means that the corresponding cocartesian 
fibration $p:\cE\to B$ is also a cartesian fibration.

For $W\subset p^{-1}(B^\eq)$ let $p':\cE'\to B$ be obtained by localizing $p:\cE\to B$ with respect to $W$.

Proposition~\ref{prp:functor} implies the following.

\begin{Crl}
Let, in the notation of Proposition~\ref{prp:functor},
$p:\cE\to B$ be a cartesian and cocartesian fibration. Assume as well 
that, apart of conditions of ~\ref{prp:functor}, for each
$a:[1]\to B$ the base change $\cE\times_B[1]$ has a cartesian and cocartesian localization. Then $p':\cE'\to B$ is also a cartesian cocartesian localization(and therefore defines a family
of derived adjoint pairs of functors).
\end{Crl}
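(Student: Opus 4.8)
The plan is to invoke Proposition~\ref{prp:functor} twice: once for $p$ itself, once for its opposite. Since the two conditions of~\ref{prp:functor} are assumed here, that proposition already shows that $F$ is left derivable with respect to $W$; in particular $p':\cE'\to B$ is a cocartesian fibration and, for every $a:[1]\to B$, the base change of $\cE\to\cE'$ along $a$ remains a localization, so that $\cE'\times_B[1]=(\cE\times_B[1])'$. Hence the only remaining point is that $p'$ is also a \emph{cartesian} fibration: once that is known, $p'$ is simultaneously classified, as a cocartesian fibration, by the pushforwards $\Left F(\alpha)$ and, as a cartesian fibration, by the pullbacks $\Right G(\alpha)$, which are the respective right adjoints, and this is precisely a family of derived adjoint pairs.

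To obtain cartesianness I would apply Proposition~\ref{prp:functor} to the cocartesian fibration $p^\op:\cE^\op\to B^\op$ with the marking $W^\op$. Localization commutes with $(-)^\op$, so $\cE^\op[(W^\op)^{-1}]=(\cE')^\op$, and $(\cE')^\op\to B^\op$ is a cocartesian fibration precisely when $\cE'\to B$ is a cartesian fibration; the second clause of left derivability for $p^\op$ is automatic from the one already obtained for $p$. So it remains to check conditions~1 and~2 of~\ref{prp:functor} for $p^\op$. Condition~1 asks, for each arrow $\alpha$ of $B$, that the localization of $(\cE\times_B[1])^\op$ be a cocartesian fibration over $[1]$, i.e. that the localization of $\cE\times_B[1]=\cE_{F(\alpha)}=\cF_{G(\alpha)}$ be a cartesian fibration over $[1]$; by hypothesis this localization exists and is cartesian-and-cocartesian, and by the previous paragraph it coincides with $\cE'\times_B[1]$, so condition~1 holds. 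This also exhibits $\cE'\times_B[1]$ as the correspondence classified, as a cocartesian fibration, by $\Left F(\alpha)$ and, as a cartesian fibration, by $\Right G(\alpha)$; by Proposition~\ref{prp:adj} these form an adjoint pair $\Left F(\alpha)\dashv\Right G(\alpha)$, and in particular each $\Left F(\alpha)$ admits a right adjoint.

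For condition~2, a $2$-simplex of $B^\op$ is a $2$-simplex of $B$ with its composition order reversed, so what must be shown is the composition-compatibility of the derived right adjoints $\Right G$. I would deduce this from condition~2 of~\ref{prp:functor} for $F$, namely the equivalence $\Left F(\gamma)\stackrel{\sim}{\to}\Left F(\beta)\circ\Left F(\alpha)$, by passing to conjugate natural transformations: since $\Left F(\gamma)\dashv\Right G(\gamma)$ and $\Left F(\beta)\circ\Left F(\alpha)\dashv\Right G(\alpha)\circ\Right G(\beta)$ (the right adjoint of a composite being the composite of the right adjoints in the opposite order), and since a transformation of left adjoints is invertible exactly when its conjugate transformation of right adjoints is, one gets the required equivalence between $\Right G(\gamma)$ and $\Right G(\alpha)\circ\Right G(\beta)$. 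Thus Proposition~\ref{prp:functor} applies to $p^\op$, so $(\cE')^\op\to B^\op$ is a cocartesian fibration, $\cE'\to B$ is a cartesian and cocartesian localization, and it defines the family of derived adjoint pairs $(\Left F(\alpha),\Right G(\alpha))$.

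The step I expect to be the main obstacle is precisely condition~2 for $p^\op$: one must be careful to see that it is not an extra hypothesis but is forced by condition~2 of~\ref{prp:functor} for $F$ together with the automatic adjunction of Proposition~\ref{prp:adj} and the uniqueness of adjoints, via the conjugation argument above. As a shortcut one could instead avoid the second application of~\ref{prp:functor} entirely: having already shown that $\cE'\to B$ is a cocartesian fibration all of whose pushforward functors $\Left F(\alpha)$ admit right adjoints, one may appeal to the standard fact that such a cocartesian fibration is automatically a cartesian fibration; but this imports an external result, whereas the route through $p^\op$ stays inside the framework developed in the paper.
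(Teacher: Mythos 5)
Your argument is correct, but it is considerably more elaborate than what the paper has in mind: the paper offers no written proof at all, presenting the corollary as an immediate consequence of Proposition~\ref{prp:functor}. The intended reading is essentially your ``shortcut'': the first application of \ref{prp:functor} gives that $p'$ is cocartesian and that $\cE'\times_B[1]$ is the localization of $\cE\times_B[1]$ for every edge; the extra hypothesis then says each $\cE'\times_B[1]\to[1]$ is also cartesian, i.e.\ each $\Left F(\alpha)$ admits a right adjoint, and a cocartesian fibration all of whose pushforward functors admit right adjoints is automatically a cartesian fibration. Your main route --- a second application of \ref{prp:functor} to $p^{\op}:\cE^{\op}\to B^{\op}$ --- is a legitimate alternative that stays inside the paper's machinery, and you correctly identify the one delicate point: condition~2 for $p^{\op}$ must be \emph{derived}, not assumed, from condition~2 for $F$ via the mate correspondence. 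That step can in fact be made cleaner in the language of the paper: condition~2 for $F$ says the canonical map $\cE'_\gamma\to\cE_{\Left F(\beta)\circ\Left F(\alpha)}$ is an equivalence of categories over $[1]$; since both sides are simultaneously cartesian and cocartesian over $[1]$ (by the extra hypothesis and Proposition~\ref{prp:adj}), an equivalence over $[1]$ identifies the cartesian structures as well, which is exactly condition~2 for $p^{\op}$, with no explicit invocation of conjugate transformations needed. So both of your routes work; the one via $p^{\op}$ buys self-containedness at the cost of the mate bookkeeping, while the shortcut matches the paper's (implicit) one-line argument but imports the standard adjoint-criterion for cartesianness of a cocartesian fibration.
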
\qed

\subsection{Deligne's definition}
\label{ss:deligne}
Let $f:\cC\to \cD$ be functor  
between conventional categories, and let $W$ be a  
muliplicative system  in $\cC$ satisfying the right calculus 
of fractions in the sense of Gabriel-Zisman~\cite{GZ}, 
Section 2.  

According to Deligne's approach~\cite{D}, Def. 1.2.1
\footnote{Deligne formulates this notion for triangulated categories.}, the left derived functor $\Left f$ is constructed as follows.

For $x\in\cC$ denote $L_x$
the category of arrows $s:x'\to x$ in $\cC$ belonging to 
$W$. Define a functor 
\begin{equation}
f':\cC^\op\times \cD\to\Set
\end{equation}
by the formula
$$
(x,y)\mapsto\colim\{\phi_{x,y}:L_x^\op\to\Set\},
$$ 
where $\phi_{x,y}(s:x'\to x)=\Hom_\cD(f(x'),y)$.
The functor $f'$ defines a correspondence from $\cC$
to $\cD$ which we denote $\cE^\Del$.

The left derived functor $\Left f$ is defined whenever 
$\cE^\Del$ is a cocartesian fibration.

In this case $\cE^\Del$ defines a functor $\cC\to \cD$ carrying
$W$ to isomorphisms, and so having a unique factorization through the localization $\cC'$ of $\cC$.

Let $\cE_f$ be the cocartesian fibration classified by the functor $f$. One has an obvious map $\cE_f\to \cE^\Del$
in $\Cor$. 
let $\cE_f'$
be the localization of $\cE$ with respect to $W$. One has a map $\delta:\cE^\Del\to\cE'_f$ defined by the compatible collection of maps 
$\Hom_\cD(f(x'),y)\to\Hom_{\cE_f'}(x,y)$ 
carrying $\alpha:f(x')\to y$ to the composition
$$x\stackrel{s^{-1}}\to x'\stackrel{\alpha}{\to}y$$
 in $\cE'_f$. The functor $\delta$ induces a functor
$\delta':(\cE^\Del)'\to\cE_f'$ from the localization of 
$\cE^\Del$ with respect to $W$, to $\cE_f'$.
This map is fully faithful, as $(\cE_f,W)$ satisfies the
right calculus of fractions. Therefore, Deligne's definition 
of derived functor, in case $W$ satisfies a calculus
of fractions, coincides with the one defined
in \ref{dfn:left}.

\subsection{$\infty$-localization versus conventional localization}
\label{ss:infty-conv}

Our definition of derived functor makes sense in  both 
contexts, as we only use universal property of 
localizations. Moreover, if a derived functor  exists in the infinity setting, its conventional truncation gives a
derived functor in the conventional setting.

\begin{prp} Lef $f:\cC\to\cD$ be a functor between
conventional categories. Let $\cC'$ and $\cD'$ be their 
$\infty$-localizations, and $\Ho(\cC),\Ho(\cD)$ their
conventional truncations obtained by applying the functor 
$\pi_0$ to all function spaces. Assume $f':\cC'\to\cD'$ is a 
left derived functor of $f$ in the sense of 
Definition~\ref{dfn:left}. Then the induced conventional functor $\Ho(f'):\Ho(\cC')\to\Ho(\cD')$ is a left derived functor in the conventional setting.
\end{prp}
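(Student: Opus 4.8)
The plan is to compute the homotopy category $\Ho(\cE'_f)$ of the localized correspondence in two different ways and to match the answers. Throughout, $\Ho$ denotes the homotopy-category functor ``$\pi_0$ on all mapping spaces'', that is, the left adjoint to the inclusion of conventional categories into $\Cat$. As a left adjoint it preserves colimits, and a direct inspection of mapping spaces shows that it preserves finite products as well; in particular $\Ho(\cX\times[1])=\Ho(\cX)\times[1]$ for every $\cX$.

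First I would record that $\Ho$ carries $\infty$-categorical localizations to conventional ones. For a conventional category $\cA$ with a subcategory $V$, write $\cA\langle V^{-1}\rangle$ for its ordinary (Gabriel--Zisman) localization. For every conventional category $\cB$ one has a chain of equivalences
\[
\Map(\Ho(\cA[V^{-1}]),\cB)\simeq\Map(\cA[V^{-1}],\cB)\simeq\Map_V(\cA,\cB)\simeq\Map(\cA\langle V^{-1}\rangle,\cB),
\]
where $\Map_V(\cA,\cB)$ is the space of functors carrying $V$ to isomorphisms, the first equivalence is the defining adjunction between $\Ho$ and the inclusion, the second is the universal property of the $\infty$-categorical localization (see~\ref{sss:loc}), and the third is the universal property of the conventional localization. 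Hence $\Ho(\cA[V^{-1}])\simeq\cA\langle V^{-1}\rangle$. Applied to $(\cC,W_\cC)$, $(\cD,W_\cD)$ and to the conventional category $(\cE_f,W_\cC\cup W_\cD)$, this shows that $\Ho(\cC')$ and $\Ho(\cD')$ are the ordinary localizations of $\cC$ and $\cD$, and that $\Ho(\cE'_f)$ is the ordinary localization of $\cE_f$ with respect to $W_\cC\cup W_\cD$.

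Second, by hypothesis $\cE'_f$ is the cocartesian fibration over $[1]$ classified by $f'$, so the Grothendieck-construction formula gives $\cE'_f=(\cC'\times[1])\sqcup^{\cC'}\cD'$. Applying $\Ho$, which preserves colimits and the product with $[1]$, yields
\[
\Ho(\cE'_f)=\bigl(\Ho(\cC')\times[1]\bigr)\sqcup^{\Ho(\cC')}\Ho(\cD'),
\]
the glueing map being $\Ho(f')$; the right-hand side is the conventional Grothendieck construction $\cE_{\Ho(f')}$, i.e.\ the ordinary cocartesian fibration over $[1]$ classified by $\Ho(f')\colon\Ho(\cC')\to\Ho(\cD')$ (the formula $\cE_g=(\cA\times[1])\sqcup^{\cA}\cB$ makes equally good sense for conventional categories). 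Putting the two computations together: the ordinary localization of $\cE_f$ with respect to $W_\cC\cup W_\cD$ is a conventional cocartesian fibration over $[1]$, with fibres $\Ho(\cC')=\cC\langle W_\cC^{-1}\rangle$ and $\Ho(\cD')=\cD\langle W_\cD^{-1}\rangle$ over $0$ and $1$, classified by $\Ho(f')$. By the conventional version of Definition~\ref{dfn:left} this is precisely the assertion that $\Ho(f')$ is a left derived functor of $f$ in the conventional setting (equivalently, by the conventional version of Corollary~4.2, the right Kan extension of $q_\cD\circ f$ along $q_\cC$).

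The step that carries the weight is the first one, the compatibility of $\Ho$ with the two flavours of localization; everything after it is formal, flowing from $\Ho$ being a colimit- and product-preserving functor onto conventional categories. The only subtlety worth flagging is that ``cocartesian fibration over $[1]$'' must be read in the conventional sense on the target side of the second computation — but this causes no trouble, precisely because the Grothendieck-construction formula $\cE_g=(\cA\times[1])\sqcup^{\cA}\cB$, hence the passage from a functor to the correspondence it classifies, makes literal sense and is preserved by $\Ho$ in both the $\infty$-categorical and the conventional worlds.
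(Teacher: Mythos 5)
Your proof is correct, and it reaches the same two facts the paper's argument rests on, but it establishes the key one by a different route. The paper reduces everything to a single lemma — $\Ho$ carries the cocartesian fibration $\cE_g\to[1]$ classified by $g$ to the conventional cocartesian fibration classified by $\Ho(g)$ — and proves it by checking that images of cocartesian arrows stay cocartesian and that the induced map $\Ho(\cE_g)\to\cE_{\Ho(g)}$ is an equivalence on fibres. You obtain the same identification $\Ho(\cE_{f'})\simeq\cE_{\Ho(f')}$ instead from the pushout presentation $\cE_{f'}=(\cC'\times[1])\sqcup^{\cC'}\cD'$, using that $\Ho$, as a left adjoint, preserves colimits and (by inspection of mapping spaces) finite products. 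Both arguments are short; yours avoids any verification about cocartesian arrows, at the cost of implicitly using that the pushout of conventional categories along the inclusion $\cC'\times\{1\}\hookrightarrow\cC'\times[1]$, computed in conventional categories, is again the explicit collage — which is exactly the description the paper gives in Section 2.1, so this is fair to quote. You also make explicit the step the paper dismisses as ``immediate'': that $\Ho$ intertwines Dwyer--Kan localization with Gabriel--Zisman localization, via the chain of universal properties natural in the conventional test category $\cB$. That step is genuinely needed (it is what identifies $\Ho(\cE'_f)$ with the conventional localization of $\cE_f$, and $\Ho(\cC')$, $\Ho(\cD')$ with the conventional localizations of $\cC$, $\cD$), and spelling it out is a real improvement in completeness over the paper's phrasing.
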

The claim immediately follows from the following observation.

\begin{lem}Let $f:\cC\to\cD$ be a functor, $p:\cE_f\to[1]$ be the corresponding cocartesian fibration.
Then the induced map $\Ho(p):\Ho(\cE)\to[1]$ is a cocartesian fibration classified by the functor
$\Ho(f):\Ho(\cC)\to\Ho(\cD)$.
\end{lem}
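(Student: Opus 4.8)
The plan is to unwind the definitions and exhibit the two cocartesian lifts explicitly in the combinatorial model of $\cE_f$ given in the opening of Section~2. Recall $\Ho(\cD)$ is obtained from $\cD$ by $\pi_0$ on all mapping spaces, so by the explicit description of $\Hom_{\cE_f}$ in \ref{ss:def-derived} (for conventional categories, but the same formula holds on $\Ho$ of the mapping spaces in general), $\Ho(\cE_f)$ has objects $\Ob(\cC)\sqcup\Ob(\cD)$ and $\Hom_{\Ho(\cE_f)}(x,y)$ equal to $\pi_0\Hom_\cC(x,y)$, $\pi_0\Hom_\cD(x,y)$, $\pi_0\Hom_\cD(f(x),y)$, or $\emptyset$ in the four respective cases. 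The point is that the functor $\pi_0:\cS\to\Set$ preserves products and the base change along $\{i\}\to[1]$, so $\Ho(p):\Ho(\cE_f)\to[1]$ is still a functor with fibers $\Ho(\cC)$ over $0$ and $\Ho(\cD)$ over $1$, and one must only check that the cocartesian lifts survive.

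First I would record the general fact that $\Ho$, as a functor $\Cat\to\Cat$ (valued in conventional categories, viewed inside $\Cat$), preserves finite products; in particular $\Ho([1])=[1]$ and $\Ho(\cE\times_{[1]}\{i\})=\Ho(\cE)\times_{[1]}\{i\}$, so $\Ho$ of a fibration over $[1]$ is again a functor over $[1]$ with the expected fibers. Then I would characterize cocartesian fibrations over $[1]$ in the most elementary way: $p:\cE\to[1]$ is cocartesian iff every object $x$ of the fiber $\cE_0$ admits an arrow $\xi:x\to x_1$ with $p(\xi)$ the nontrivial arrow $0\to 1$ such that composition with $\xi$ induces, for every $y\in\cE_1$, an equivalence $\Map_{\cE_1}(x_1,y)\xrightarrow{\ \sim\ }\Map^{0\to 1}_{\cE}(x,y)$ (the component of $\Map_\cE(x,y)$ over the arrow $0\to1$). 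For $\cE=\cE_f$ the canonical such $\xi$ is the unit-type arrow $x\to f(x)$, and the induced map $\Map_\cD(f(x),y)\to\Map^{0\to1}_{\cE_f}(x,y)$ is the identity by the very definition of $\cE_f$.

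Next I would apply $\pi_0$ to this characterization. For a fixed $x\in\cC$ we have the arrow $[\xi]\in\pi_0\Hom_{\cE_f}(x,f(x))=\Hom_{\Ho(\cE_f)}(x,f(x))$ lying over $0\to1$, and for every $y\in\cD$ the composition map on $\Ho$-level is the image under $\pi_0$ of the identity $\Map_\cD(f(x),y)\to\Map^{0\to1}_{\cE_f}(x,y)$, hence is a bijection $\Hom_{\Ho(\cD)}(f(x),y)\to\Hom^{0\to1}_{\Ho(\cE_f)}(x,y)$. This says exactly that $[\xi]$ is a $\Ho(p)$-cocartesian arrow, so $\Ho(p)$ is a cocartesian fibration. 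Finally, to identify the classifying functor: the cocartesian transport sends $x\in\Ho(\cE_f)_0=\Ho(\cC)$ to the target $f(x)\in\Ho(\cE_f)_1=\Ho(\cD)$ of $[\xi]$, and on a morphism $[g]:x\to x'$ the transported arrow is the unique arrow $f(x)\to f(x')$ making the evident square commute, which by the composition formula in $\cE_f$ is $[f(g)]=\Ho(f)([g])$; so the classifying functor is $\Ho(f)$.

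The main obstacle is purely expository rather than mathematical: I must be careful that the combinatorial description of $\Hom_{\cE_f}$ given in the conventional-category paragraph of \ref{ss:def-derived} is legitimately used at the level of mapping spaces (equivalently, that $\cE_f=(\cC\times[1])\sqcup^\cC\cD$ computes $\Map^{0\to1}_{\cE_f}(x,y)=\Map_\cD(f(x),y)$ for all $x,y$), and that $\pi_0$ genuinely commutes with the relevant pullback of spaces defining the ``component over $0\to1$''. Both are standard — the first is the content of the cited \cite{H.lec}, 9.8, and the second holds because $\{0\to1\}$ is a connected component of the discrete space $\Map_{[1]}(0,1)=*$, so the fiber is just a summand and $\pi_0$ of a disjoint union is the disjoint union of $\pi_0$'s. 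Once these two points are stated cleanly, the rest is the formal manipulation above.
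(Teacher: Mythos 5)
Your proof is correct and rests on the same key observation as the paper's: a $p$-cocartesian arrow $x\to f(x)$ in $\cE_f$ remains cocartesian in $\Ho(\cE_f)$, because over $[1]$ cocartesianness is detected by mapping spaces into the fiber over $1$ and $\pi_0$ carries equivalences of spaces to bijections. The only cosmetic difference is at the end: the paper identifies the classifying functor by producing the comparison map $\Ho(\cE_f)\to\cE_{\Ho(f)}$ of cocartesian fibrations and checking it is an equivalence on fibers, whereas you compute the cocartesian transport directly; both are fine.
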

\begin{proof}
The commutative square 
$$
\xymatrix{
&\cC\ar^f[r]\ar[d]&\cD\ar[d]\\
&\Ho(\cC)\ar^{\Ho(f)}[r]&\Ho(\cD)
}
$$
induces a map  $\cE_f\to\cE_{\Ho(f)}$ in $\Coc([1])$.
The map $\Ho(\cE_f)\to[1]$ is a cocartesian fibration
as the image in $\Ho(\cE_f)$ of any cocartesian arrow in 
$\cE_f$ is cocartesian. Therefore, a map of cocartesian
fibrations $\Ho(\cE_f)\to\cE_{\Ho(f)}$ is induced. It is
an equivalence as it induces an equivalence of the fibers.
\end{proof}


\begin{thebibliography}{MMMM}

\bibitem[C]{C} D.-C.~Cisinski,  Higher categories and homotopical algebra, {\tt http://www.mathematik.uni-regensburg.de/cisinski/CatLR.pdf}, the manuscript to appear in Cambridge studies in advanced mathematics, Cambridge University Press.
\bibitem[D]{D} P.~Deligne, Cohomologie \`a support propre,
SGA 4, expos\'e XVII, Lecture notes in math, 305 (1973),
Springer.
\bibitem[GZ]{GZ} P.~Gabriel,M.~Zisman, Calculus of fractions and homotopy theory, Ergebnisse der Mathematik und ihrer Grenzgebiete, Band 35(1967), Springer.
\bibitem[RD]{RD} R.~Hartshorne, Residues and Duality,
lecture notes of a seminar on the work of A.~Grothendieck,
Harvard 1963/64,  Lecture notes in mathematics, 20 (1966),
Springer.
\bibitem[H.EY]{H.EY} V.~Hinich, Yoneda lemma for enriched
$\infty$-categories, arXiv:1805.07635.
\bibitem[H.L]{H.L} V.~Hinich, Dwyer-Kan localization revisited, Homology Homotopy Appl. 18 (2016), no. 1, 27-–48. 
\bibitem[H.lec]{H.lec} V.~Hinich, Lectures on infinity categories, arXiv:1709.06271, 125 pp.
\bibitem[HS]{HS} A.~Hirschowitz, C.~Simpson, Descente pour les $n$-champs, arXiv:math/9807049v3.
\bibitem[K]{K} B.~Keller, Derived categories and their uses,
Handbook of algebra, Vol. 1, 671–701, Handb. Algebr., 1, Elsevier/North-Holland, Amsterdam, 1996.
\bibitem[L.HA]{L.HA} J.~Lurie, Higher algebra, preprint 
September 18, 2017, available at 
http://www.math.harvard.edu/~lurie/papers/HA.pdf.
\bibitem[L.T]{L.T} J.~Lurie, Higher topos theory,
 Annals of Mathematics Studies, 170. Princeton University Press, Princeton, NJ, 2009. xviii+925 pp.
\bibitem[Mal]{Mal} G.~Maltsiniotis, Le th\'eor\`eme de Quillen d'adjonction de foncteurs d\'eriv\'es, revisit\'e,
C.R. Acad. Sci. Paris, Ser I  344 (2007), 549--552.
\bibitem[Me]{Me} N.~Meadows, Descent theory and mapping spaces, arXiv:1809.00592v2.
\bibitem[Q.HA]{Q.HA} D.~Quillen, Homotopical algebra, 
Lecture notes in mathematics, 43, 1967, Springer.

\end{thebibliography}
\end{document}